\newcommand{\dfn}{:=}
\renewcommand{\emptyset}{\varnothing}
\renewcommand{\phi}{\varphi}
\renewcommand{\epsilon}{\varepsilon}
\newcommand{\IH}{\mathit{IH}}
\newcommand{\blue}[1]{{\color{blue} #1}}
\theoremstyle{definition}
\newcommand{\Bool}{\{0,1\}}
\newcommand{\Nat}{\mathbb N}
\newcommand{\Even}{\mathbb E}
\newcommand{\Odd}{\mathbb O}
\newcommand{\Pij}[2]{\Pi^{#1}_{#2}}
\newcommand{\Sij}[2]{\Sigma^{#1}_{#2}}
\newcommand{\Dij}[2]{\Delta^{#1}_{#2}}
\renewcommand{\O}{\mathcal O}
\renewcommand{\succ}{\mathsf s}
\newcommand{\pow}[1]{\mathcal P (#1)}
\newcommand{\proj}[1]{\mathsf p_{#1}}
\newcommand{\lfp}[1]{\mu#1}
\newcommand{\gfp}[1]{\nu#1}
\newcommand{\II}[1]{\mathcal I_{#1}}
\newcommand{\I}[1]{I_{#1}}
\newcommand{\Iord}[2]{\I{#2}^{#1}}
\newcommand{\liff}{\leftrightarrow}
\newcommand{\limp}{\rightarrow}
\newcommand{\dom}[1]{|#1|}
\newcommand{\model}[1]{\mathfrak{#1}}
\newcommand{\M}{\model M}
\newcommand{\interp}[2]{#2^{#1}}
\newcommand{\N}{\mathfrak N}
\newcommand{\Ninterp}[1]{\interp \N {#1}}
\renewcommand{\models}{\vDash}
\newcommand{\notmodels}{\nvDash}
\renewcommand{\approx}[2]{#1^{#2}}
\newcommand{\lang}{\mathcal L}
\newcommand{\ID}{\mathsf{ID}}
\newcommand{\langarith}{\lang_{A}}
\newcommand{\langiid}{\lang_{<\omega}}
\newcommand{\FV}{\mathsf{FV}}
\newcommand{\fv}[1]{\FV(#1)}
\newcommand{\natinv}{n}
\newcommand{\eveninv}{e}
\newcommand{\oddinv}{o}
\newcommand{\nat}{N}
\newcommand{\even}{E}
\newcommand{\oddd}{O}
\newcommand{\varnatinv}{m}
\newcommand{\varnat}{M}
\newcommand{\IDL}[1]{\ID}
\newcommand{\IDn}[1]{\ID_{#1}}
\newcommand{\IDfin}{\IDn{<\omega}}
\newcommand{\cidfin}{\mathsf C \IDfin}
\newcommand{\CIDfin}{\cidfin}
\newcommand{\Q}{\mathsf Q}
\newcommand{\PA}{\mathsf{PA}}
\newcommand{\CPA}{\mathsf{CA}}
\newcommand{\CA}{\mathsf{CA}_0}
\newcommand{\RCA}{\mathsf R \CA}
\newcommand{\ACA}{\mathsf A \CA}
\newcommand{\ATR}{\mathsf{ATR}}
\newcommand{\XCA}[1]{#1 \text{-} \CA}
\newcommand{\PCA}{\XCA {\Pij 1 1}}
\newcommand{\LO}{\mathrm{LO}}
\newcommand{\WF}{\mathrm{WF}}
\newcommand{\WO}{\mathrm{WO}}
\newcommand{\infrule}{\mathsf r}
\newcommand{\supp}[1]{T_{#1}}
\newcommand{\seqar}{\Rightarrow}
\newcommand{\lrule}[1]{#1\text{-}l}
\newcommand{\rrule}[1]{#1\text{-}r}
\newcommand{\id}{\mathsf{id}}
\newcommand{\wk}{\mathsf w}
\newcommand{\notl}{\lrule \lnot}
\newcommand{\notr}{\rrule \lnot}
\newcommand{\eql}{\lrule =}
\newcommand{\eqr}{\rrule =}
\newcommand{\orl}{\lrule \lor}
\newcommand{\orr}{\rrule \lor}
\newcommand{\andl}{\lrule \land}
\newcommand{\andr}{\rrule \land}
\newcommand{\exl}{\lrule \exists}
\newcommand{\exr}{\rrule \exists}
\newcommand{\alll}{\lrule \forall}
\newcommand{\allr}{\rrule \forall}
\newcommand{\idl}[1]{\lrule{\I{#1}}}
\newcommand{\idr}[1]{\rrule{\I{#1}}}
\newcommand{\cut}{\mathsf{cut}}
\newcommand{\ind}{\mathsf{ind}}
\newcommand{\xind}[1]{\ind(#1)}
\newcommand{\proves}{\vdash}
\newcommand{\nwfproves}{\proves_{\mathrm{nwf}}}
\newcommand{\cycproves}{\proves_{\mathrm{cyc}}}
\newcommand{\Lsys}{\mathsf L}
\newcommand{\Klass}{\mathsf K}
\newcommand{\LK}{\Lsys\Klass}
\newcommand{\LKeq}{\LK_=}
\newcommand{\LID}{\Lsys\ID}
\newcommand{\lidn}[1]{\LID_{#1}}
\newcommand{\lidfin}{\lidn{<\omega}}
\newcommand{\lidfinnoind}{\lidfin^-}
\newcommand{\CK}{\mathrm{CK}}
\newcommand{\ck}{\omega_1^{\CK}}
\newcommand{\code}[1]{\ulcorner #1 \urcorner}
\newcommand{\sat}[1]{\mathrm{Sat}_{#1}}
\title{Cyclic proofs for arithmetical inductive definitions} 
\author{Anupam Das}{University of Birmingham, UK \and \url{http://www.anupamdas.com} }{a.das@bham.ac.uk}{https://orcid.org/0000-0002-0142-3676}{}
\author{Lukas Melgaard}{University of Birmingham, UK}{lxm402@student.bham.ac.uk}{}{}
\authorrunning{A. Das and L.\, Melgaard} 
\keywords{cyclic proofs, inductive definitions, arithmetic, fixed points, proof theory}
\begin{document}
\nolinenumbers

\maketitle

\begin{abstract}
We investigate the cyclic proof theory of extensions of Peano Arithmetic by (finitely iterated) inductive definitions. Such theories are essential to proof theoretic analyses of certain `impredicative' theories; moreover, our cyclic systems naturally subsume Simpson's Cyclic Arithmetic.

Our main result is that cyclic and inductive systems for arithmetical inductive definitions are equally powerful. We conduct a metamathematical argument, formalising the soundness of cyclic proofs within second-order arithmetic by a form of induction on closure ordinals, thence appealing to conservativity results. This approach is inspired by those of Simpson and Das for Cyclic Arithmetic, however we must further address a difficulty: the closure ordinals of our inductive definitions (around Church-Kleene) far exceed the proof theoretic ordinal of the appropriate metatheory (around Bachmann-Howard), so explicit induction on their notations is not possible. For this reason, we rather rely on formalisation of the theory of (recursive) ordinals within second-order arithmetic.
\end{abstract}

\section{Introduction}

\emph{Cyclic proof theory} studies `proofs' whose underlying dependency graph may not be well-founded, but are nonetheless regular.
Soundness for such systems is controlled by an appropriate `correctness criterion', usually an $\omega$-regular property on infinite branches, defined at the level of formula ancestry.
Cyclic proofs are a relatively recent development in proof theory (and related areas), with origins in seminal work of Niwi\'nski and Walukiewicz for the modal $\mu$-calculus \cite{NW96:games-for-mu}.
Inspired by that work, Brotherston and Simpson studied the extension of first-order logic by (ordinary) inductive definitions \cite{Brotherston05:cyc-prf,BroSim06:seq-calc-inf-desc,BroSimp11:seq-calc-inf-desc}.
More recently, Simpson has proposed \emph{Cyclic Arithmetic} ($\CPA$), an adaptation of usual Peano Arithmetic ($\PA$) to the cyclic setting \cite{Simpson17:cyc-arith}.

One of the recurring themes of cyclic proof theory is the capacity for non-wellfounded reasoning to simulate inductive arguments with apparently simpler (and often \emph{analytic}) invariants.
Indeed this difference in expressivity has been made formal in various settings \cite{BerTat17:non-equivalence,BerTat19:non-equivalence} and has been exploited in implementations \cite{BroDisPet11:aut-cyc-ent,RowBro17:aut-cyc-term,TelBro17:aut-ver-temp-props,TelBro20:aut-ver-temp-props}.
Within the setting of arithmetic, we have a more nuanced picture: while Simpson showed that $\CPA$ and $\PA$ are equivalent as theories \cite{Simpson17:cyc-arith}, Das has shown that indeed the logical complexity of invariants required in $\CPA$ is indeed strictly simpler than in $\PA$ \cite{Das20:log-comp-cyc-arith}.
These arguments typically follow a metamathematical approach, formalising the soundness argument of cyclic proofs themselves within arithmetic and relying on a \emph{reflection} principle (though there are alternative approaches too, cf.~\cite{BerTat17:equivalence,BerTat18:int-equivalence}).
Due to the infinitary nature of non-wellfounded proofs and the complexity of correctness, such arguments require a further detour through the reverse mathematics of $\omega$-automata theory, cf.~\cite{KMPS16:log-strength-buchi,KMPS19:log-strength-buchi}.

In this work we somewhat bridge the aforementioned traditions in the $\mu$-calculus, first-order logic with inductive definitions, and arithmetic.
In particular we present a cyclic proof system $\CIDfin$ over the language of (finitely iterated) arithmetical inductive definitions: the closure of the language of arithmetic under formation of (non-parametrised) fixed points.
Such languages form the basis of important systems in proof theory, in particular $\IDfin$, which allows for an ordinal analysis of impredicative second-order theories such as $\PCA$.
Our cyclic system $\CIDfin$ over this language is essentially recovered by directly importing analogous definitions from the $\mu$-calculus and first-order inductive definitions.

Our main result is the equivalence between $\CIDfin$ and its inductive counterpart $\IDfin$.
While subsuming inductive proofs by cyclic proofs is a routine construction, the converse direction constitutes a generalisation of ideas from the setting of arithmetic, cf.~\cite{Simpson17:cyc-arith,Das20:log-comp-cyc-arith}.
One particular nuance here is that the soundness of cyclic proofs with forms of inductive definitions typically reduces to a form of induction on the corresponding \emph{closure ordinals}.
For the setting of even unnested inductive definitions, $\IDn 1$, closure ordinals already exhaust all the recursive ordinals (up to Church-Kleene, $\ck$).
On the other hand the proof theoretical ordinal of $\IDn 1 $ is only the Bachmann-Howard ordinal, so we cannot formalise the required induction principle on explicit ordinal notations.
Instead we rely on a (known) formalisation of (recursive) ordinal theory \emph{within} appropriate fragments of second-order arithmetic.

This paper is structured as follows.
In \Cref{sec:prelims-syntax-semantics} we recall the syntax and semantics of first-order logic with inductive definitions, as well as the Knaster-Tarski fixed point theorem specialised to $\pow\Nat$.
In \Cref{sec:iid} we recall $\PA$ and $\IDfin$, recast in the sequent calculus to facilitate the definition of $\CIDfin$.
The latter is presented in \Cref{sec:cid} where we also show its simulation of $\IDfin$.
In \Cref{sec:soundness} we show that the system $\CIDfin$ is indeed sound for the standard model.
In \Cref{sec:ids-truth-so-arith,sec:approxs+tr-in-so-arith} we formalise aspects of inductive definitions, truth, order theory and fixed point theory within suitable fragments of second-order arithmetic.
Finally in \Cref{sec:cid-to-id} we present the converse simulation, from $\CIDfin$ to $\IDfin$, by essentially arithmetising the soundness argument of \Cref{sec:soundness}.

Due to space constraints, most proofs and auxiliary material are omitted.

\section{Syntax and semantics of arithmetical inductive definitions}
\label{sec:prelims-syntax-semantics}

\subsection{First-order logic (with equality)}
In this work we shall work in predicate logic over various languages, written $\lang, \lang'$ etc.
We write $x,y$ etc.\ for (first-order) variables and $s,t$ etc.\ for terms, and $\phi,\psi$ etc.\ for formulas (including equality). 
For later convenience, we shall write formulas in \emph{De Morgan normal form}, with negations only in front of atomic formulas.
I.e.\ formulas are generated from `atomic' formulas $P(\vec t),\lnot P(\vec t), s=t, \lnot s=t$ under $\lor, \land, \exists, \forall$.
From here we use standard abbreviations for negation and other connectives.

In order to interpret `inductive definitions' in the next section, it will be useful to consider a variation of usual Henkin semantics that interprets (relativised) formulas as operators on a structure.
Given a language $\lang$, We write $\lang(X)$ for the extension of $\lang$ by the fresh predicate symbol $  X $.
For instance formulas of $\lang(X)$, where $X$ is unary, include all those of $\lang$, new `atomic' formulas of the form $X(t)$ and $\lnot X( t)$, and are closed under usual logical operations.

 Fix a language $\lang$ and $\lang$-structure $\M$ with domain $M$. 
    Let $X$ be a fresh $k$-ary predicate symbol and let $\vec x = x_1, \dots, x_l$ be distinguished variables.
    Temporarily expand $\lang$ to include each $a\in M$ as a constant symbol 
    and each $A\subseteq  M^k$ as a predicate symbol
    and fix $\interp \M a \dfn a$
    and $\interp \M A \dfn A$.
    We interpret formulas $\phi(X,\vec x)$ of $\lang(X)$ as functions $\phi^\M: \pow{M^k} \to \pow {M^l}$ by setting $\vec a \in \interp \M \phi (A)$ just if $\M \models \phi[A/X][\vec a/\vec x]$.

Let us call a formula $\phi(X)$ \emph{positive} in $X$ if it has no subformula of the form $\lnot X(\vec t)$.
The following result motivates the `positive inductive definitions' we consider in the next section:

\begin{proposition}
[Positivity implies monotonicity]
\label{pos-implies-mon}
Let $\lang$, $\M$, $X$, $\vec x$ be as above.
If $\phi$, a formula of $\lang(X)$, is positive in $X$ then $\interp \M \phi$ is monotone: $A\subseteq B \implies \interp \M \phi (A)\subseteq \interp\M\phi(B)$.    
\end{proposition}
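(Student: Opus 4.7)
The plan is to proceed by a straightforward structural induction on the positive formula $\phi(X,\vec x)$, verifying monotonicity of $\interp{\M}{\phi}$ at each formula-construction step. Throughout, fix $A \subseteq B \subseteq M^k$ and let $\vec a \in M^l$.

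For the base cases, consider the atomic formulas. If $\phi$ is $P(\vec t)$, $\lnot P(\vec t)$, $s = t$, or $\lnot s = t$ where $P \neq X$, then $\interp{\M}{\phi}$ is constant in its $\pow{M^k}$-argument, hence trivially monotone. The essential base case is $\phi = X(\vec t)$, where $\vec a \in \interp{\M}{\phi}(A)$ iff the tuple $\vec t^{\,\M}[\vec a/\vec x]$ lies in $A$; this set grows (weakly) as $A$ does, giving monotonicity. The case $\lnot X(\vec t)$ is precisely the one ruled out by the positivity hypothesis, which is why positivity is the right syntactic restriction.

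For the inductive step, suppose by the induction hypothesis that $\interp{\M}{\phi}$ and $\interp{\M}{\psi}$ are monotone. For $\phi \lor \psi$, we have $\interp{\M}{\phi \lor \psi}(A) = \interp{\M}{\phi}(A) \cup \interp{\M}{\psi}(A)$, and union preserves inclusion in each argument, so monotonicity is inherited. The case $\phi \land \psi$ is analogous using intersection. For $\exists y\, \phi$, unfolding the Henkin semantics we have $\vec a \in \interp{\M}{\exists y\, \phi}(A)$ iff there exists some $b \in M$ with $(\vec a, b) \in \interp{\M}{\phi}(A)$ (treating $y$ as one of the distinguished variables of $\phi$); if this holds for $A$ and $A \subseteq B$, the same witness $b$ works for $B$ by the induction hypothesis. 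The case $\forall y\, \phi$ is symmetric: a uniform witness condition on all $b \in M$ transfers from $A$ to $B$ by the inductive monotonicity of $\interp{\M}{\phi}$.

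There is no real obstacle here; the content of the lemma is almost definitional once the semantic bracket $\interp{\M}{\cdot}$ is unpacked. The only subtlety worth flagging is bookkeeping around the distinguished variables $\vec x$ versus bound variables in the quantifier cases, so that when passing under $\exists y$ or $\forall y$ the induction hypothesis is applied to the formula $\phi$ viewed as having distinguished variables $\vec x, y$. Once this is handled, the positivity hypothesis plays its single role of excluding the antitone atomic case $\lnot X(\vec t)$, and the rest is routine.
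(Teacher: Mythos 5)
Your proof is correct and follows exactly the route the paper indicates: a straightforward structural induction on $\phi$, where positivity is used only to exclude the antitone case $\lnot X(\vec t)$ and all other cases (atomic, $\lor$, $\land$, $\exists$, $\forall$) are routine. Nothing further is needed.
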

\begin{proof}
[Proof idea]
By straightforward induction on the structure of $\phi$.

\end{proof}

\subsection{Languages of arithmetic and (finitely iterated) inductive definitions}

The \emph{language of arithmetic (with inequality) } is $\langarith \dfn \{0,\succ, +,\times, <\}$.
Here, as usual, $0$ is a constant symbol (i.e.\ a $0$-ary function symbol), $\succ$ is a unary function symbol, $+$ and $\times$ are binary function symbols, and $<$ is a binary relation symbol.

Throughout this paper we shall work with (certain extensions of) $\langarith$:

\begin{definition}
    [(Finitely iterated) inductive definitions]   
    $\langiid$ is the smallest language containing $\langarith$ and closed under:
    \begin{itemize}
        \item if $\phi$ is a formula of $\langiid(X)$ positive in $X$, for $X$ a fresh unary predicate symbol, and $x$ is a distinguished variable, then $\I{\phi,X,x}$ is a unary predicate symbol of $\langiid$.
    \end{itemize}
\end{definition}

Note that we only take the case that $X$ is unary above since we can always code $k$-ary predicates using unary ones within arithmetic.
When $X,x$ are clear from context, we shall simply write $\I\phi$ instead of $\I{\phi,X, x}$.
We shall also frequently suppress free variables and parameters (i.e.\ predicate symbols), e.g.\ writing interchangably $\phi(X,x)$ and $\phi$, when it is convenient and unambiguous.

Let us introduce some running examples for this work.

\begin{example}
[Naturals, evens and odds]
\label{nat-even-odd-ind-dfns}
We define the following formulas of $\langarith(X)$:
\begin{itemize}
    \item $\natinv (X,x) \dfn x=0 \lor \exists y (X(y) \land x = \succ y)$ .
    \item $\eveninv (X,x) \dfn x=0 \lor \exists y (X(y) \land x = \succ \succ y)$.
    \item $\oddinv (X,x) \dfn x= 1 \land \exists y (X(y) \land x=\succ \succ y) $ (where $1\dfn \succ 0$).
\end{itemize}
By definition $\langiid$ contains the symbols $\nat \dfn \I \natinv$, $\even \dfn \I\eveninv$ and $\oddd\dfn \I\oddinv$.
Now, writing, 
\begin{itemize}
    \item $\varnatinv(X,x)\dfn e(X,x) \lor (\forall y (\even (y) \limp X(y)) \land x=1)$
\end{itemize}
we also have that $\varnat \dfn \I{\varnatinv}$ is a symbol of $\langiid$, by the closure property of the language.
\end{example}

All our theories are interpreted by the `standard model' of arithmetic $\N = (0,\succ, +,\times,<)$, which we extend to a $\langiid$-structure by:
 \begin{itemize}
        \item $\Ninterp{\I{\phi,X,x}} \dfn \bigcap \{ A \subseteq \Nat : \Ninterp{\phi}(A)\subseteq A\}$
    \end{itemize}

\subsection{On Knaster-Tarski: inductive definitions as fixed points}
\label{sec:kt-ind-dfns}
We conclude this section by making some comments about the interpretation of inductive definitions as fixed points.
Let us first state a version of the well-known Knaster-Tarski theorem specialised to the setting at hand:

\begin{proposition}
    [Knaster-Tarski on $\pow\Nat$]
    \label{knaster-tarski-on-P(N)}
    Let $F : \pow\Nat \to \pow\Nat$ be monotone, i.e.\ $A\subseteq B \subseteq \Nat \implies F(A)\subseteq F(B)$.
    Then $F$ has a least fixed point $\mu F$ and a greatest fixed point $\nu F$. 
    Moreover, we have:
    \(
    \lfp F = \bigcap \{A\subseteq \Nat : F(A) \subseteq A\}
    \)
    
    and
    \(
    \gfp F = \bigcup \{ A \subseteq \Nat : A\subseteq F(A)\}
    \).
\end{proposition}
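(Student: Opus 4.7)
The plan is to carry out the standard Knaster-Tarski argument, which applies verbatim since $(\pow\Nat, \subseteq)$ is a complete lattice under pointwise intersection and union. I will treat the least fixed point claim in detail; the greatest fixed point claim is entirely dual and I would handle it by symmetry.

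First, I would set $L \dfn \bigcap \{A \subseteq \Nat : F(A) \subseteq A\}$, calling such $A$ \emph{closed}, and aim to show in turn that (i) $L$ is closed; (ii) $L$ is in fact a fixed point of $F$; and (iii) $L$ is contained in every fixed point of $F$. Step (i) uses monotonicity once: for any closed $A$, $L \subseteq A$ gives $F(L) \subseteq F(A) \subseteq A$; since $A$ was an arbitrary closed set, $F(L) \subseteq L$. Step (ii) then uses monotonicity a second time: from $F(L) \subseteq L$ we get $F(F(L)) \subseteq F(L)$, so $F(L)$ is itself closed and therefore $L \subseteq F(L)$ by the definition of $L$; combined with (i) this yields $L = F(L)$. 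Step (iii) is immediate since every fixed point $A$ satisfies $F(A) = A \subseteq A$ and so is closed, hence contains $L$.

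For the greatest fixed point I would dualise, setting $G \dfn \bigcup \{ A \subseteq \Nat : A \subseteq F(A) \}$ (the union of the \emph{post-fixed points}) and using monotonicity in the symmetric way: for each such $A$, $A \subseteq G$ combined with $A \subseteq F(A)$ and monotonicity gives $A \subseteq F(A) \subseteq F(G)$, hence $G \subseteq F(G)$; monotonicity then forces $F(G) \subseteq F(F(G))$, so $F(G)$ is itself a post-fixed point and therefore $F(G) \subseteq G$. Maximality follows because any fixed point is trivially a post-fixed point.

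There is no genuine obstacle: the argument is the classical one and does not rely on any special structure of $\pow\Nat$ beyond its being a complete lattice under $\subseteq$. The only point worth double-checking is that the intersections and unions appearing in the statement are well-defined; this is harmless here since the side condition $A \subseteq \Nat$ confines the quantification to elements of the set $\pow\Nat$, so no proper-class issues arise.
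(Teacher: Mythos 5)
Your proof is correct and is exactly the standard Knaster--Tarski argument that the paper implicitly relies on: the paper states this proposition without proof (as a well-known specialisation), and your two-fold use of monotonicity to show the intersection of pre-fixed points is itself a fixed point, together with the dual argument for post-fixed points, is the intended reasoning. Nothing to add.
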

We shall henceforth adopt the notation of the theorem above, writing $\lfp F$ and $\gfp F$ for the least and greatest fixed point of an operator $F$, when they exist.

In light of \Cref{pos-implies-mon} we immediately have:
\begin{corollary}
    $\Ninterp {\I\phi} = \lfp {\, \Ninterp \phi}$, i.e.\ $\Ninterp {\I\phi}$ is the least fixed point of $\Ninterp \phi : \pow \Nat \to \pow \Nat$.
\end{corollary}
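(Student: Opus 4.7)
The plan is to chain the two preceding propositions with the defining equation of $\Ninterp{\I\phi}$. There is essentially nothing to do beyond unfolding definitions, but we should be careful about why $\Ninterp\phi$ makes sense as a function $\pow\Nat \to \pow\Nat$ in the first place, since $\phi$ may itself contain symbols of the form $\I\psi$ introduced earlier in the construction of $\langiid$.

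First I would argue, by induction on the construction of $\langiid$, that $\N$ canonically interprets every symbol of $\langiid$, so in particular $\Ninterp\phi : \pow\Nat \to \pow\Nat$ is a well-defined operator (using the convention from the previous subsection that assigns to every $\lang(X)$-formula an operator on the powerset of the domain). This is where the well-foundedness of the language definition does the work: by the time we form $\I{\phi,X,x}$, all inductive predicates occurring in $\phi$ have already been interpreted.

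Next, by the formation condition on $\I{\phi,X,x}$, the formula $\phi$ is positive in $X$. Applying \Cref{pos-implies-mon} to the interpretation in $\N$ gives that $\Ninterp\phi$ is monotone. Then applying \Cref{knaster-tarski-on-P(N)} to $\Ninterp\phi$ yields
\[
\lfp{\,\Ninterp\phi} \;=\; \bigcap\{A\subseteq\Nat : \Ninterp\phi(A)\subseteq A\},
\]
and the right-hand side is exactly $\Ninterp{\I{\phi,X,x}}$ by the definition given just before the statement. Combining these equalities gives the claim.

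The only step that is not entirely bookkeeping is the initial well-definedness point, but it is routine: the closure clause defining $\langiid$ is a strictly inductive one, so an outer induction on formation suffices. No obstacle is expected; the corollary is a direct consequence of \Cref{pos-implies-mon} and \Cref{knaster-tarski-on-P(N)}.
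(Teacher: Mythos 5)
Your proposal is correct and matches the paper's own (implicit) argument: the corollary is stated as an immediate consequence of \Cref{pos-implies-mon} together with \Cref{knaster-tarski-on-P(N)}, since $\Ninterp{\I\phi}$ is by definition the intersection of the pre-fixed points of $\Ninterp\phi$. Your additional remark on well-definedness of $\Ninterp\phi$ by induction on the formation of $\langiid$ is a harmless elaboration of what the paper leaves tacit.
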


\begin{example}
[Naturals, evens and odds: interpretation]
\label{nat-even-odd-interpretations}
    Revisiting \Cref{nat-even-odd-ind-dfns} we have:
    \begin{itemize}
        \item $\Ninterp \nat = \Nat$
        \item $\Ninterp \even = \Even \dfn \{ 2n : n \in \Nat\}$
        \item $\Ninterp \oddd = \Odd \dfn \{2n+1 : n \in \Nat\}$
    \end{itemize}
    It turns out that also $\Ninterp{\varnat} = \Nat$.
    While this is readily verifiable with the current definitions, we shall delay a justification of this until we have built up some more technology.
\end{example}

\begin{example}
[Greatest fixed points]
    Thanks to negation, we can also express \emph{greatest} fixed points (within $\pow \Nat$), thanks to the equality $\nu F = (\mu \lambda A (F(A^c)^c))^c$, here writing $\cdot^c$ for the complement of a set in $\pow \Nat$ and $\lambda$ for abstraction.
    Syntactically we can write $J_{\phi(X,x)}  \dfn \lnot \I{\lnot \phi(\lnot X,x)}  $, denoting the greatest fixed point of the operator $\Ninterp \phi$, allowing us to express forms of `codata' in $\langiid$.
    
    For instance, let us assume a standard pairing bijection $\Nat \times \Nat \to \Nat$, write $\proj 0 $ and $\proj 1$ for the left and right inverses respectively.
    Write $\eta(X,x)\dfn \even (\proj 0 x) \land X(\proj 1 x) $.
    Then the greatest fixed point $\Ninterp J_\eta$ is just the set of finitely supported streams of even numbers.

    As another example, given formulas $\phi(x,y), \psi(x) $ we can write $[\phi^*]\psi \dfn J_{\chi(X,x)}$ for $\chi(X,x)\dfn \psi(x) \land \forall y (\phi(x,y) \limp X(y))$.
    Now, construing $\Ninterp\phi$ as a binary relation on natural numbers, we have that $\Ninterp {([\phi^*]\psi)}$ consists of all those points from which every (finite) $\Ninterp \phi$-path leads to a point in $\Ninterp \psi$.
\end{example}

It is well known that least (and greatest) fixed points can be approximated `from below' (and `from above', respectively) via the notion of \emph{(ordinal) approximant}.
For any $F:\pow{\Nat} \to \pow{\Nat}$, let us define by transfinite induction,
\begin{equation}
    \label{inflationary-construction}
    \begin{array}{r@{\ \dfn \ }ll}
	\approx F 0 (A) & A \\
	\approx F {\alpha + 1} (A) & F(\approx F \alpha(A) ) \\
	\approx F \lambda (A) & \bigcup\limits_{\alpha <\lambda} \approx F \alpha(A) & \text{if $\lambda$ is a limit ordinal}
\end{array}
\end{equation}
By appealing to the transfinite pigeonhole principle we have:
\begin{proposition}
\label{char-fps-by-closure-ordinal-approximants}
	For $F:\pow{\Nat}\to\pow{\Nat}$ monotone,
	there is an ordinal $\alpha $ s.t.\ $\lfp F = \approx F \alpha (\emptyset)$.
\end{proposition}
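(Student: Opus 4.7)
The plan is to exhibit $\alpha$ as the ordinal at which the inflationary sequence $(\approx F \beta (\emptyset))_{\beta \in \mathrm{Ord}}$ stabilises, and then to match this fixed point with the characterisation of $\lfp F$ given by Knaster--Tarski (\Cref{knaster-tarski-on-P(N)}).

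First I would establish that the approximants form a $\subseteq$-increasing chain, i.e.\ $\beta \leq \gamma \Rightarrow \approx F \beta (\emptyset) \subseteq \approx F \gamma (\emptyset)$. This goes by a routine transfinite induction on $\gamma$: the base case is vacuous; at successors one applies monotonicity of $F$ to the inductive hypothesis $\approx F \beta (\emptyset) \subseteq \approx F \gamma (\emptyset)$ to obtain $\approx F {\beta+1}(\emptyset) \subseteq \approx F {\gamma+1}(\emptyset)$, noting separately that $\approx F \gamma (\emptyset) \subseteq F(\approx F \gamma (\emptyset))$ follows by another induction from $\emptyset \subseteq F(\emptyset)$; the limit case is immediate from the defining union.

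Next I would invoke the \emph{transfinite pigeonhole} hinted at in the statement: were the chain strictly increasing at every successor step, choosing for each $\beta$ some $n_\beta \in \approx F {\beta+1}(\emptyset) \setminus \approx F \beta (\emptyset)$ would yield an injection from $\mathrm{Ord}$ into $\Nat$, which is absurd. Hence there is an ordinal $\alpha$ with $\approx F {\alpha+1}(\emptyset) = \approx F \alpha (\emptyset)$, i.e.\ $F(\approx F \alpha (\emptyset)) = \approx F \alpha (\emptyset)$, so $\approx F \alpha (\emptyset)$ is a fixed point of $F$ and in particular a prefixed point.

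Finally I would verify minimality. By a transfinite induction on $\beta$, for any prefixed point $A$ (i.e.\ $F(A) \subseteq A$) one has $\approx F \beta (\emptyset) \subseteq A$: the base case uses $\emptyset \subseteq A$, the successor uses monotonicity and the hypothesis $\approx F \beta (\emptyset) \subseteq A$ to get $\approx F {\beta+1}(\emptyset) = F(\approx F \beta (\emptyset)) \subseteq F(A) \subseteq A$, and limits follow by taking unions. Applied at $\beta = \alpha$ and combined with the Knaster--Tarski formula $\lfp F = \bigcap \{A : F(A) \subseteq A\}$, this gives $\lfp F \subseteq \approx F \alpha (\emptyset)$; the reverse inclusion is an instance of the same induction taking $A \dfn \lfp F$. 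I do not anticipate any serious obstacle: every step is a textbook transfinite induction, and the only slightly subtle point is justifying the pigeonhole argument by the class-vs-set distinction, which is standard.
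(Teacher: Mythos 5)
Your argument is correct and is exactly the one the paper has in mind: the paper's "proof" is just the phrase "by appealing to the transfinite pigeonhole principle", and your write-up (increasing chain of approximants, stabilisation at some $\alpha$ by pigeonhole, then containment in every prefixed point by transfinite induction, matched against the Knaster--Tarski intersection formula) is the standard way to fill that in. One small bookkeeping slip in your last paragraph: the inclusion $\lfp F \subseteq F^\alpha(\emptyset)$ comes from $F^\alpha(\emptyset)$ being a prefixed point together with $\lfp F = \bigcap\{A : F(A)\subseteq A\}$, while the induction at $\beta=\alpha$ (equivalently, taking $A := \lfp F$) gives the reverse inclusion $F^\alpha(\emptyset) \subseteq \lfp F$ --- you have the two justifications swapped, but both ingredients are already established in your proof.
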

Indeed we may assume that such $\alpha$ is \emph{countable} and, by the well-ordering principle, there is indeed a \emph{least} such $\alpha$ satisfying the proposition above.

\begin{example}
[Naturals, evens and odds: closure ordinals]
\label{nat-even-odd-closure-ordinals}
    Revisiting \Cref{nat-even-odd-ind-dfns} again, it is not hard to see that the approximants of $\Ninterp \natinv, \Ninterp \eveninv, \Ninterp \oddinv$ are respectively:
    \[
    \begin{array}{rcl}
         \approx {(\Ninterp \natinv)} 0 (\emptyset) & = & \emptyset \\
         \approx {(\Ninterp \natinv)} 1 (\emptyset) & = & \{0\} \\
         \approx {(\Ninterp \natinv)} 2 (\emptyset) & = & \{0,1\} \\
         & \vdots & \\
         \approx {(\Ninterp \natinv)} \omega (\emptyset) & = & \Nat 
    \end{array}
    \qquad
    \begin{array}{rcl}
         \approx {(\Ninterp \eveninv)} 0 (\emptyset) & = & \emptyset \\
         \approx {(\Ninterp \eveninv)} 1 (\emptyset) & = & \{0\} \\
         \approx {(\Ninterp \eveninv)} 2 (\emptyset) & = & \{0,2\} \\
         & \vdots & \\
         \approx {(\Ninterp \eveninv)} \omega (\emptyset) & = & \Even
    \end{array}
    \qquad 
    \begin{array}{rcl}
         \approx {(\Ninterp \oddinv)} 0 (\emptyset) & = & \emptyset \\
         \approx {(\Ninterp \oddinv)} 1 (\emptyset) & = & \{1\} \\
         \approx {(\Ninterp \oddinv)} 2 (\emptyset) & = & \{1,3\} \\
         & \vdots & \\
         \approx {(\Ninterp \oddinv)} \omega (\emptyset) & = & \Odd 
    \end{array}
    \]
    Note that for each of these operators we reached the (least) fixed point for the first time at stage $\omega$. 
    We say that $\omega$ is the \emph{closure ordinal} of these operators.

    Now, returning to the formula $\varnatinv(X,x)$, let us finally compute its least fixed point in $\N$ by the method of approximants:
    \[
    \begin{array}{rcl}
         (\Ninterp{\varnatinv})^0 (\emptyset) & =&  \emptyset \\
         (\Ninterp{\varnatinv})^1 (\emptyset) & =&  \{0\} \\
         (\Ninterp{\varnatinv})^2 (\emptyset) & =&  \{0,2\} \\
         & \vdots & 
    \end{array}
    \quad
    \begin{array}{rcl}
             (\Ninterp{\varnatinv})^\omega (\emptyset) & = &  \Even \\
         (\Ninterp{\varnatinv})^{\omega+1}(\emptyset) & =&  \Even \cup \{1\} \\
         (\Ninterp{\varnatinv})^{\omega + 2}(\emptyset) & =&  \Even \cup \{1,3\} \\
         & \vdots & 
    \end{array}
    \quad
\begin{array}{rcl}
        (\Ninterp{\varnatinv})^{\omega 2} (\emptyset) & = &  \Even \cup \Odd = \Nat \\
        \phantom{\Ninterp{\varnatinv})^{\omega + 2}(\emptyset)} \\
        \phantom{\Ninterp{\varnatinv})^{\omega + 2}(\emptyset)} \\
       \phantom{\vdots} 
    \end{array}
    \]
    Thus indeed $\Ninterp {\I\varnatinv} = \Nat$, but this time with closure ordinal $\omega2$.
\end{example}

\section{Arithmetical theories of inductive definitions}
\label{sec:iid}

Thusfar we have only considered the language of arithmetic and inductive definitions (`syntax') and structures over these languages (`semantics').
We shall now introduce \emph{theories} over these languages, 
in particular setting them up within a \emph{sequent calculus} system, in order to facilitate the definition of the non-wellfounded and cyclic systems we introduce later.

\begin{definition}
    [Sequent calculus for $\PA$]
   A \emph{sequent} is an expression $\Gamma \seqar \Delta$ where $\Gamma$ and $\Delta $ are sets of formulas (sometimes called \emph{cedents}).\footnote{The symbol $\seqar$ is just a syntactic delimiter, but is suggestive of the semantic interpretation of sequents.}
    The calculus $\LKeq$ for first-order logic with equality and substitution is given in \Cref{fig:lkeq}.

    The sequent calculus for $\PA$ extends $\LKeq$ by initial sequents for all axioms of Robinson Arithmetic $\Q$, as well as the induction rule:
     \[
    \vliinf{\ind}{\text{\footnotesize $y$ fresh}}{\Gamma \seqar \Delta, \phi(t)}{\Gamma \seqar \Delta, \phi(0)}{\Gamma , \phi(y) \seqar \Delta, \phi(\succ y)}
    \]
\end{definition}

\begin{figure}
	\centering
	\[
	\begin{array}{c}
 \vlinf{\id}{}{\Gamma,\phi \seqar \Delta,\phi}{}
	\quad 
	\vlinf{\wk}{}{\Gamma,\Gamma' \seqar \Delta,\Delta'}{\Gamma \seqar \Delta}
	\quad
	\vlinf{\theta}{}{\theta(\Gamma) \seqar \theta(\Delta)}{\Gamma \seqar \Delta}
	\quad
	\vliinf{\cut}{}{\Gamma \seqar \Delta}{\Gamma \seqar \Delta, \phi}{\Gamma, \phi \seqar \Delta}
\\
\noalign{\smallskip}
\begin{array}{ccc}
	\vlinf{\notl}{}{\Gamma, \lnot \chi \seqar \Delta}{\Gamma \seqar \Delta, \chi}
	&
\vliinf{\orl}{}{\Gamma, \phi \lor \psi \seqar \Delta}{\Gamma, \phi \seqar \Delta}{\Gamma, \psi \seqar \Delta} 
&
\vlinf{\orr}{}{\Gamma\seqar \Delta, \phi \lor \psi}{\Gamma\seqar \Delta, \phi,\psi}
\\
\noalign{\smallskip}
\vlinf{\notr}{}{\Gamma \seqar \Delta, \lnot \chi}{\Gamma, \chi \seqar \Delta}
&
\vliinf{\andr}{}{\Gamma \seqar \Delta, \phi \land \psi}{\Gamma \seqar \Delta, \phi}{\Gamma \seqar \Delta, \psi}
&
\vlinf{\andl}{}{\Gamma, \phi\land \psi \seqar \Delta}{\Gamma, \phi, \psi \seqar \Delta}
\end{array}
\\
\begin{array}{cc}
\vlinf{\allr}{\text{\footnotesize $y$ fresh}}{\Gamma \seqar \Delta, \forall x \phi}{\Gamma \seqar \Delta, \phi[y/x]}
	&
	\vlinf{\alll}{}{\Gamma, \forall x \phi(x)\seqar \Delta}{\Gamma, \phi(t) \seqar \Delta}
\\
\noalign{\smallskip}
     \vlinf{\exl}{\text{\footnotesize $y$ fresh}}{\Gamma, \exists x \phi\seqar \Delta }{\Gamma, \phi[y/x] \seqar \Delta}
&
	\vlinf{\exr}{}{\Gamma \seqar \Delta, \exists x \phi(x)}{\Gamma \seqar \Delta, \phi(t)}
 \\
 \noalign{\smallskip}
 		\vlinf{\eql}{}{\Gamma(t,s) ,s=t \seqar \Delta(t,s)}{\Gamma(s,t) \seqar \Delta(s,t)}
	&
	\vlinf{\eqr}{}{\Gamma \seqar \Delta,t=t}{}
\end{array}
\end{array}
	\]
	\caption{The sequent calculus $\LKeq$ for first-order logic with equality. $\theta$ is always a substitution, i.e.\ a map from variables to terms, extended to formulas and cedents in the expected way. $\chi$ is always an atomic formula $P(\vec t)$ or $s=t$.}
	\label{fig:lkeq}
\end{figure}

We will present some examples of proofs shortly, but first let us develop the implementation of the first-order theories we consider within the sequent calculus.

\subsection{Theory of (finitely iterated) inductive definitions}
$\IDfin$ is a $\langiid$-theory that extends $\PA$ by (the universal closures of):\footnote{Formally, we include instances of the induction schema for all formulas $\phi$ in the extended language too.}
\begin{itemize}
    \item (Pre-fixed) $\forall x (\phi(\I\phi,x) \limp \I\phi(x))$
    \item (Least) $\forall x (\phi(\psi,x) \limp \psi(x)) \limp \forall x (\I\phi(x)\limp \psi(x))$
\end{itemize}
for all formulas $\phi(X,x)$ positive in $X$.

Note that, while the first axiom states that $\I\phi$ is a \emph{pre-fixed point} of $\phi(-)$, the second axiom (schema) states that $\I\phi$ is least among the (arithmetically definable) pre-fixed points.
As before, we implement this theory within the sequent calculus:
\begin{definition}
    [Sequent calculus for $\IDfin$]
    The sequent caclulus for $\IDfin$ extends that for $\PA$ by the rules:
    
    \begin{equation}
		\label{ia-fixed-point-rules}
		\vlinf{\idl \phi}{}{\Gamma, \I \phi( t) \seqar \Delta }{\Gamma, \phi(\I \phi,  t) \seqar \Delta }
		\qquad
		\vlinf{\idr \phi}{}{\Gamma \seqar \Delta, \I \phi ( t)}{\Gamma \seqar \Delta , \phi(\I \phi,  t)}
	\end{equation}
	\begin{equation}
		\label{ia-induction-rule}
		\vliinf{\xind\phi}{\text{\footnotesize $ y$ fresh}}{\Gamma, \I \phi ( t) \seqar \Delta}{\Gamma, \phi(\psi,  y) \seqar \Delta, \psi( y) }{\Gamma, \psi(t) \seqar \Delta}
	\end{equation}
\end{definition}

\subsection{Examples}
In this subsection we consider some examples of sequent proofs for $\IDfin$.

Note that the $\idr \phi$ and $\xind \phi$ rules correspond respectively to the axioms we gave for $\IDfin$. 
The $\idl \phi$ rule, morally stating that $\I\phi$ is a \emph{post-fixed point} of $\phi(-)$, does not correspond to any of the axioms. 
In fact we may consider it a form of `syntactic sugar' that will be useful for defining our cyclic systems later:

\begin{example}
[Post-fixed point]
We can derive the $\idl \phi$ rule from the other two as follows:
\[
\vlderivation{
\vliin{\xind \phi}{}{\Gamma, \I\phi(t)\seqar \Delta}{
    \vliq{\phi}{}{\phi (\phi(\I\phi), y) \seqar \phi(\I\phi,y)}{
    \vlin{\idr\phi}{}{\phi(\I\phi,y)\seqar \I\phi(y)}{
    \vlin{\id}{}{\phi(\I\phi,y)\seqar \phi(\I\phi,y)}{\vlhy{}}
    }
    }
}{
    \vlhy{\Gamma, \phi(\I\phi,t)\seqar \Delta}
}
}
\]
where the derivation marked $\phi$ (`functoriality') is obtained by structural induction on $\phi$, similar to \Cref{functoriality-finitary-iid} below with substitution of $\phi(\I\phi)$ and $\I\phi$ for $Y$ and $Z$ respectively.
\end{example}

\begin{example}
    [Deep inference / Functoriality]
    \label{functoriality-finitary-iid}
    Given a formula $\phi(Y)$ positive in $Y$ and predicate symbols $Y,Z$ we can construct a derivation of $\phi(Y)\seqar \phi(Z)$ from hypothesis $Y(y) \seqar Z(y)$ by induction on the structure of $\phi$.
    To be more precise, the notation $\phi(Y)$ here distinguishes also occurrences of $Y$ \emph{inside} the invariant of an inductive predicate symbol, e.g.\ if we write $\phi(Y) = \I{\phi(X,Y,x)}$ then we construe $\phi(Z) = \I{\phi(X,Z,x)}$.
    The construction is mostly straightforward but for two interesting cases:
    \begin{itemize}
        \item If $\phi(Y)$ is atomic, say $Y(t)$, then we must use substitution:
        \[
        \vlinf{[t/y]}{}{Y(t)\seqar Z(t)}{{Y(y)\seqar Z(y)}}
        \]
        \item If $\phi(Y)$ is itself an inductive predicate, say $\I{\psi(X,Y,x)}(t)$, we construct the following dervation:
        \[
        \vlderivation{
        \vliin{\xind{\psi(-,Z,x)}}{}{\I{\psi(X,Y,x)}(t) \seqar \I{\psi(X,Z,x)} (t) }{
            \vlin{\idr{\psi(-,Z,z)}}{}{\psi (\I{\psi(X,Z,x)},Y,z)  \seqar \I{\psi(X,Z,x)}(z) }{
            \vliq{\IH}{}{ \psi (\I{\psi(X,Z,x)},Y,z)  \seqar \psi (\I{\psi(X,Z,x)},Z,z)    }{
            \vlhy{Y(y)\seqar Z(y)}
            }
            }
        }{
            \vlin{\id}{}{\I{\psi(X,Z,x)} (t)  \seqar \I{\psi(X,Z,x)} (t) }{\vlhy{}}
        }
        }
        \]
        where the derivation marked $\IH$ above is obtained from the inductive hypothesis for $\psi(X,Y)$, substituting $\I{\psi(X,Z,x)} (t) $ for $X$.
    \end{itemize}
    Let us point out that the construction above (as well as the remaining cases) does not make use of the $\idl \phi$ rules.
\end{example}

\begin{example}
    [Subsuming numerical induction]
    \label{subsuming-numerical-induction}
    Recalling the inductive predicate $\nat$ from \Cref{nat-even-odd-ind-dfns}, the usual induction rule of $\PA$ is an immediate consequence of $\forall x \nat(x)$:
    \[
    \vlderivation{
    \vlin{\forall x \nat (x)}{}{\Gamma \seqar \Delta, \phi(t)}{
    \vliin{ \xind n }{}{\Gamma, \nat(t) \seqar \Delta, \phi(t)}{
        \vliin{\exists,\lor}{}{\Gamma, \natinv(\phi,z) \seqar \Delta, \phi(z)}{
            \vlin{=}{}{z=0, \Gamma \seqar \Delta, \phi(z)}{\vlhy{\Gamma \seqar \Delta, \phi(0)}}
        }{
            \vlin{=}{}{z=\succ y , \Gamma , \phi(y) \seqar \Delta, \phi(z)}{\vlhy{\Gamma, \phi(y) \seqar \Delta, \phi(\succ y)}}
        }
    }{
        \vlin{\id}{}{\phi(t)\seqar \phi(t)}{\vlhy{}}
    }
    }
    }
    \]
\end{example}

\section{Cyclic proofs for the theory of (finitely iterated) inductive definitions}
\label{sec:cid}
In this section we introduce our `cyclic' version of the theory $\IDfin$, based on ideas from the modal $\mu$-calculus \cite{NW96:games-for-mu,Studer08:pt-for-mu,AfshariLeigh17:cut-free-mu} and calculi of first-order logic with inductive definitions \cite{Brotherston05:cyc-prf,BroSim06:seq-calc-inf-desc,BroSimp11:seq-calc-inf-desc}.

\subsection{Non-wellfounded and cyclic proofs}
The `non-wellfounded derivations' we consider will be potentially infinite proofs (of height $\leq \omega$) generated coinductively from the rules of the calculus.
More formally:

\begin{definition}
[Preproofs]
A \emph{(infinite, binary) tree} is a prefix-closed (potentially infinite) subset of $\Bool^*$. 
A preproof $\pi$ in a system $\Lsys$ is a map from a tree $\supp \pi$ (the \emph{support} of $\pi$) to inference steps of $\Lsys$ such that,
whenever $\pi(v)$ has premisses $S_1,\dots, S_n$, $v$ has precisely $n$ children\footnote{Implicit here is the assumption that all rules of $\Lsys$ have at most two premisses, so $n\leq 2$. This assumption covers all systems in this work.} $v_1 , \dots, v_n \in \supp \pi$ where $\pi(v_1), \dots, \pi(v_n)$ have conclusions $S_1, \dots, S_n$ respectively.

Given some $u \in T_\pi$, we write $\pi_u$ for the preproof with support $\supp{\pi_u} =  \{v: uv \in T_\pi \}$ given by $\pi_u(v) \dfn \pi(uv)$.
We call such $\pi_u$ a \emph{sub-preproof} of $\pi$.
If $\pi$ has only finitely many sub-preproofs, we call it \emph{regular} or \emph{cyclic}.
\end{definition}

Regular preproofs $\pi$ can be represented as a finite (possibly cyclic) graph in the expected way, 
by simply quotienting $\supp \pi$ by the relation $\sim\, \subseteq \supp \pi \times \supp \pi$ given by $u \sim v $ if $\pi_u = \pi_v$.
Let us now set up our principal system of interest:

\begin{definition}
    [Rules for preproofs]
    The system $\lidfinnoind$ extends $\LKeq$ by:
    \begin{itemize}
        \item initial sequents $\seqar \phi$ for each axiom $\phi$ of $\Q$; and,
        \item the rules $\idl\phi$ and $\idr\phi$ from \eqref{ia-fixed-point-rules}; and,
        \item the following additional rule: $\vlinf{\nat}{}{\Gamma\seqar \Delta, \nat (t)}{}$
    \end{itemize}
\end{definition}

The `$-$' superscript in $\lidfinnoind$ indicates that we do not include the $\xind \phi$ rules in this system. 
Note in the definition above that, in light of \Cref{subsuming-numerical-induction}, we have chosen to simplify our system by omitting an explicit rule for numerical induction and instead simply including a rule that insists that our domain consists only of natural numbers.
This streamlines the resulting definition of `progressing trace':

\begin{definition}
[Traces and progress]
Fix a $\lidfinnoind$-preproof $\pi$ and $(v_i)_{i \in \omega}$ an infinite branch along $\supp \pi$.
A \emph{trace} along $(v_i)_{i\in \omega}$ is a sequence of formulas $(\phi_i)_{i\geq k}$, with each $\phi_i$ occurring on the LHS of $\pi(v_i)$, such that for all $i\geq k$:
\begin{itemize}
	\item $\pi(v_i)$ is not a substitution step and $\phi_{i+1} = \phi_i$; or,
	\item $\pi(v_i)$ is a $\theta$-substitution step and $\theta(\phi_{i+1})=\phi_i$; or,
	\item $\pi(v_i)$ is a $\eql$ step with respect to $s=t$ and, for some $\psi(x,y)$, we have $\phi_{i+1} = \psi(s,t)$ and $\phi_i = \psi(t,s)$; or,
	\item $\phi_i$ is the principal formula of $\pi(v_i)$ and $\phi_{i+1}$ is auxiliary.
\end{itemize}
We say that $\phi_{k+1}$ is an \emph{immediate ancestor} of $\phi_k$ if they extend to some trace $(\phi_i)_{i\geq k}$.

A trace $(\phi_i)_{i\geq k}$ is \emph{progressing} if it is principal infinitely often.
\end{definition}

\begin{definition}
[Non-wellfounded proofs]
A (non-wellfounded) $\lidfinnoind$-\emph{proof} is a $\lidfinnoind$-preproof $\pi$ for which each infinite branch has a progressing trace. We also say that $\pi$ is progressing in this case. 
If $\pi$ is regular, we call it a \emph{cyclic proof}.

We write $\lidfinnoind \nwfproves \phi$ or $\lidfinnoind\cycproves\phi$ if there is a non-wellfounded or cyclic, respectively, $\lidfinnoind$-proof of $\phi$.
We write $\CIDfin$ for the class of cyclic $\lidfinnoind$-proofs.
\end{definition}

Many of the basic results and features of non-wellfounded and cyclic proofs for arithmetic from \cite{Simpson17:cyc-arith,Das20:log-comp-cyc-arith} are present also in our setting,
and we point the reader to those works for several examples further to those we give here.

\begin{example}
[Naturals, evens and odds: proving relationships]
\label{nat-even-oddd-relationships-cyclic-prfs}
Let us revisit once more \Cref{nat-even-odd-ind-dfns}.
Several examples about the relationships between $\nat,\even,\oddd$ for a similar framework of first-order logic with inductive definitions are given in \cite{Brotherston05:cyc-prf,BroSim06:seq-calc-inf-desc,BroSimp11:seq-calc-inf-desc}, in particular including ones with complex cycle structure.
Here we shall instead revisit the relationship between the inductive predicates $\varnat$ and $\nat$.

Recall that we showed in \Cref{nat-even-odd-closure-ordinals} that $\nat$ and $\varnat$ compute the same set, namely $\Nat$, in the standard model.
We can show this formally within $\CIDfin$ by means of cyclic proofs.
For the direction $\varnat \subseteq \nat$:
\[
\vlderivation{
\vlin{\idl \varnatinv}{\bullet}{\blue{\varnat(x)} \seqar \nat(x)}{
\vliin{\orl}{}{\blue{\varnatinv (\varnat, x) }\seqar \nat(x)}{
    \vliin{\orl}{}{\blue{\eveninv (\varnat, x)}\seqar \nat (x) }{
        \vliq{\nat(0)}{}{x=0 \seqar \nat(x)}{\vlhy{}}
    }{
        \vlin{=,\land,\exists}{}{\blue{\exists y (x=\succ \succ y \land \varnat (y))} \seqar \nat(x)}{
        \vliq{\nat(\succ \succ)}{}{\blue{\varnat (y)} \seqar \nat(\succ \succ y)}{
        \vlin{[y/x]}{}{\blue{\varnat(y)}\seqar \nat(y)}{
        \vlin{\idl \varnatinv}{\bullet}{\blue{\varnat (x) }\seqar \nat (x)}{\vlhy{\vdots}}
        }
        }
        }
    }
}{
    \vlin{\wk,\land}{}{\forall y (E(y)\limp \varnat (y)) \land x=1 \seqar \nat(x)}{
    \vliq{\nat(1)}{}{x=1 \seqar \nat (1)}{\vlhy{}}
    }
}
}
}
\]
where the derivations marked $\nat(0), \nat(\succ \succ), \nat(1)$ all have simple finite proofs by unfolding $\nat$ on the RHS.
Again we indicate by $\bullet $ roots of identical subproofs, and the only infinite branch, looping on $\bullet$, has progressing trace in blue. 
\end{example}

\begin{example}
[Deep inference / Functoriality, revisited]
\label{functoriality-cyclic-iid}
Recalling \Cref{functoriality-finitary-iid}, we can actually build simpler such (cyclic) derivations in $\lidfinnoind$, again by structural induction.
Again the critical case is when $\phi(Y)$ is itself an inductive predicate, say $\I{\phi(X,Y,x)}$:
\[
\vlderivation{
\vlin{\idl \phi}{\bullet}{\blue{\I{\phi(X,Y,x)} (t)} \seqar \I{\phi(X,Z,x)} (t) }{
\vlin{\idr \phi}{}{\blue{\phi (\I{\phi(X,Y,x)}, Y,t)} \seqar \I{\phi(X,Z,x)}(t) } {
\vliiq{\IH}{}{\blue{\phi (\I{\phi(X,Y,x)}, Y,t) }\seqar \phi(\I{\phi(X,Z,x)},Z,t)}{
    \vlin{\idl\phi}{\bullet}{ \blue{\I{\phi(X,Y,x)} (t)} \seqar \I{\phi(X,Z,x)} (t)}{ \vlhy{\vdots}}
}{
    \vlhy{Y(y)\seqar Z(y)}
}
}
}
}
\]
Here the steps marked $\bullet$ root identical subproofs (witnessing regularity).
The subderivation marked $\IH$ is obtained by the inductive hypothesis for $\phi(X,Y,t)$, under substitution of $\I{\phi(X,Y,x)}$ for $Y$ and $ \I{\phi(X,Z,x)} $ for $Z$.
The progressing trace along the only infinite branch, looping on $\bullet$, is indicated in blue.
Naturally, the fact that this trace remains `unbroken' during appeal to the inductive hypothesis should, strictly speaking, be verified as an invariant during the remaining (omitted) cases.
\end{example}

\subsection{Simulating inductive proofs}
Our cyclic system $\CIDfin$ subsumes $\IDfin$ by a standard construction:
\begin{theorem}
[Induction to cycles]
\label{id-to-cid}
If $\IDfin \proves \phi$ then $\CIDfin \proves \phi$.
\end{theorem}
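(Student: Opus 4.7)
The plan is to proceed by induction on the structure of the $\IDfin$-proof. Most rules are shared between $\IDfin$ and $\lidfinnoind$: all logical rules of $\LKeq$, the initial sequents for $\Q$-axioms, and the fixed-point rules $\idl\phi$ and $\idr\phi$ all translate to themselves, so the inductive hypothesis carries through immediately. The entire content of the simulation therefore lies in handling the two rules of $\IDfin$ that are absent from $\lidfinnoind$, namely the Peano induction rule $\ind$ and the inductive-predicate induction rule $\xind\phi$.

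The core construction is the cyclic simulation of $\xind\phi$. Given $\IDfin$-derivations (already translated, by IH, to cyclic proofs) of the premisses $\Gamma, \phi(\psi,y) \seqar \Delta, \psi(y)$ and $\Gamma, \psi(t) \seqar \Delta$, I would build a cyclic proof of $\Gamma, \I\phi(t) \seqar \Delta$ by first cutting on $\psi(t)$. One branch discharges into the translation of the second premiss; the other branch must cyclically prove $\Gamma, \I\phi(t) \seqar \Delta, \psi(t)$. This latter sequent is marked as a backlink target, and is proven by applying $\idl\phi$ to reduce to $\Gamma, \phi(\I\phi, t) \seqar \Delta, \psi(t)$, then splicing in the cyclic functoriality derivation of \Cref{functoriality-cyclic-iid} instantiated with $Y \mapsto \I\phi$ and $Z \mapsto \psi$, whose hypothesis $\I\phi(y)\seqar \psi(y)$ is discharged by the backlink to the marked target. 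The resulting reduction $\Gamma, \phi(\psi,t) \seqar \Delta, \psi(t)$ is then (a substitution instance of) the translation of the first premiss. The progressing trace follows $\I\phi$ from the cut, through the $\idl\phi$ unfolding, and into the functoriality block, where at each atomic occurrence of $\I\phi$ the backlink triggers, making the trace principal at each loop.

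The $\ind$ case I would handle by a direct transcription of the pattern in \Cref{subsuming-numerical-induction}: first derive $\nat(t)$ in one step using the $\nat$-axiom of $\lidfinnoind$, then apply the cyclic simulation of $\xind\nat$ developed above (with the invariant $\phi$ of the original $\ind$ step serving as $\psi$). Regularity of the final proof is preserved because each simulation step introduces only finitely many new sub-proofs on top of the (by IH regular) translations of the premisses, and the inserted functoriality block is itself regular.

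The main obstacle will be the bookkeeping required to verify the progress condition along every infinite branch of the translated proof. In particular, branches may thread through nested occurrences of the $\xind\phi$ simulation (when the invariant $\psi$ contains further inductive predicates, possibly of $\I\phi$ itself), and one must check that the outer $\I\phi$-trace is not spuriously broken when entering the functoriality sub-derivation, and that any branch which never returns to the outer backlink instead eventually stays inside a functoriality block, where a smaller inductive predicate carries a progressing trace of its own. Once this trace-compositionality is established, the theorem follows by straightforward assembly.
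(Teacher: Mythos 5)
Your proposal is correct and follows essentially the same route as the paper: induction on the $\IDfin$-derivation, simulating $\xind\phi$ by a cut on $\psi(t)$, a backlinked sequent $\Gamma, \I\phi(t)\seqar\Delta,\psi(t)$ unfolded by $\idl\phi$, and the cyclic functoriality derivation of \Cref{functoriality-cyclic-iid} (cut against the substituted first premiss) with the progressing trace tracking $\I\phi$ through each loop. Your explicit treatment of the $\ind$ rule via the $\nat$ rule and \Cref{subsuming-numerical-induction} is exactly the intended (though unstated) handling in the paper, and your closing remarks on trace bookkeeping match the level of detail the paper itself leaves implicit.
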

\begin{proof}
[Proof sketch]
    We proceed by induction on the structure of a $\IDfin$ proof.
    The critical step is $\xind \phi$, for which we do not have a corresponding rule in $\lidfinnoind$.
    We simulate this rule by,
    \[
    \vlderivation{
    \vliin{\cut}{}{\Gamma, \I\phi(t)\seqar \Delta}{
        \vlin{\idl \phi}{\bullet}{\Gamma, \blue{\I\phi(t)}\seqar \Delta, \psi(t)}{
        \vliin{\cut}{}{\Gamma, \blue{\phi(\I\phi,t)}\seqar \Delta,\psi(t)}{
            \vliq{\phi}{}{\Gamma, \blue{\phi(\I\phi,t)}\seqar \Delta, \phi(\psi,t)}{
            \vlin{\idl \phi}{\bullet}{\Gamma, \blue{\I\phi(t)} \seqar \Delta, \psi(t)}{\vlhy{\vdots}}
            }
        }{
            \vlin{[t/y]}{}{\Gamma, \phi(\psi,t)\seqar \Delta, \psi(t)}{\vlhy{\Gamma, \phi(\psi,y)\seqar \Delta, \psi(y)}}
        }
        }
    }{
        \vlhy{\Gamma, \psi(t)\seqar \Delta}
    }
    }
    \]
    where $\bullet$ marks roots of identical subproofs and the derivation marked $\phi$ is obtained by induction on the structure of $\phi$, see 
    \Cref{functoriality-cyclic-iid}.
    Any infinite branch is either progressing by the induction hypothesis, or loops infinitely on $\bullet$ and has the progressing trace coloured in \blue{blue}.
\end{proof}

\noindent
Of course, the converse result is much harder (and, indeed, implies soundness of cyclic proofs).

\subsection{About traces}
\label{sec:about-traces}

Our notion of (progressing) trace may seem surprisingly simple to the seasoned cyclic proof theorist, when comparing to analogous conditions in similar logics such as the $\mu$-calculus requiring complex `signatures', e.g.\ \cite{NW96:games-for-mu,Studer08:pt-for-mu,AfshariLeigh17:cut-free-mu}.
However this simplicity arises naturally from the way we have formulated our syntax.
Let us take some time to detail some of properties of (progressing) traces that will facilitate our soundness argument later.

Write $\II{}$ for the set of inductive predicates of $\langiid$ (i.e.\ the set of symbols $\I\phi$).
Write $<$ for the smallest transitive relation on $\II{}$ satisfying:
\begin{itemize}
    \item if $\I\phi$ occurs in $\psi(X,x)$ then $\I\phi < \I\psi$. 
\end{itemize}
By the inductive definition of the language $\langiid$, it is immediate that $<$ is a well-founded relation on $\II{}$.
In what follows, we shall extend $<$ arbitrarily to a (total) well-order on $\II{}$, so as to freely use of terminology peculiar to linear orders.

\begin{proposition}
    [Properties of progressing traces]
    \label{prog-trace-properties}
    Let $(\tau_i)_{i\geq k}$ be a progressing trace.
    There is a (unique) inductive predicate symbol $\I\psi$ and some $k'\geq k$ such that:
    \begin{enumerate}
        \item\label{prog-trace-inf-often-id-critical} $\tau_i$ is of the form $\I\psi (t)$ and principal for infinitely many $i\geq k$;
        \item\label{prog-trace-always-has-id-critical} $\I\psi$ occurs positively in each $\tau_i$, for $i\geq k'$;   
         \item\label{prog-trace-id-critical-is-greatest} for any $j\geq k'$ and $\I\chi$ occurring in $\tau_j$, we have $\I\chi \leq \I\psi$.
    \end{enumerate}
\end{proposition}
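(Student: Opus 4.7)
The plan is to track, along the trace $(\tau_i)_{i \geq k}$, the $<$-maximum inductive predicate symbol occurring in $\tau_i$. The main observation is that this maximum is $<$-weakly decreasing in $i$, so by well-foundedness of the total well-order $<$ on $\II{}$ it must stabilise at some value $\I\psi$ from some index $k_1 \geq k$ onward; this is our candidate, and any $k' \geq k_1$ will witness part~(3). To verify monotonicity I proceed by case analysis on the rule at $\pi(v_i)$: non-principal steps and substitution/$\eql$-steps preserve the multiset of predicate symbols of $\tau_i$; principal logical rules other than $\idl$ pass $\tau_i$ to a subformula, which can only shrink the set of occurring inductive predicates. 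The critical case is a principal $\idl\phi$ step, where $\tau_i = \I\phi(t)$ and $\tau_{i+1} = \phi(\I\phi,t)$: the inductive predicates of $\tau_{i+1}$ are $\I\phi$ itself together with those in $\phi$, each strictly $< \I\phi$ by definition of $<$, so the maximum $\I\phi$ is preserved.

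For part~(1), I would introduce an auxiliary atomic-complexity $|\tau|$ counting the non-$\I$ logical connectives in $\tau$, treating every $\I\chi(t)$ as atomic. For $i \geq k_1$ a principal $\idl\phi$ step forces $\I\phi = \I\psi$ (else the maximum would strictly drop), while every other principal step strictly decreases $|\cdot|$. If only finitely many principal $\idl\psi$ steps occurred along the trace, then past the last such step $|\tau_i|$ would strictly decrease at every (still infinitely many) principal step --- impossible. Hence infinitely many principal $\idl\psi$ steps occur, and at each such $i$ we have $\tau_i = \I\psi(t)$, as required.

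For part~(2), I take $k'$ to be the index immediately after the first principal $\idl\psi$ step past $k_1$, so that $\tau_{k'} = \psi(\I\psi, t)$, in which $\I\psi$ occurs only positively by positivity of $\psi$ in $X$. This positivity then propagates forward: subformula extraction preserves the polarity of each atom in De Morgan normal form, substitution and $\eql$-steps introduce no negations, subsequent principal $\idl\psi$ steps reproduce $\psi(\I\psi, t)$, and principal $\idl\phi$ steps with $\I\phi < \I\psi$ are impossible by the stabilisation of the maximum. Uniqueness of $\I\psi$ is immediate, since any second candidate would also have to coincide with the eventual stable maximum. The main obstacle I anticipate is the careful interplay between maximum-preservation and the progress condition: one must verify that $|\cdot|$ is genuinely invariant under substitution and equality rewrites, and that the ``other'' principal steps truly decrease it, so that progression forces the trace to infinitely often reach $\I\psi(t)$ rather than plateau on some other inductive structure.
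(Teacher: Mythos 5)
Your treatment of parts (1) and (3) is sound and is essentially the paper's own argument in a different packaging: tracking the $<$-maximum inductive predicate occurring in $\tau_i$ and letting it stabilise is the same content as the paper's key claim that if $\tau_i = \I\phi(t)$ then every $\I\chi$ occurring in a later $\tau_j$ satisfies $\I\chi \leq \I\phi$, and your measure $|\cdot|$ (connectives outside inductive predicates) is exactly the ``decreases in size'' argument the paper uses to force infinitely many principal unfoldings of the stabilised symbol $\I\psi$.

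The gap is in part (2). Preservation of \emph{polarity} under subformula extraction does not give preservation of \emph{occurrence}: at a principal $\orl$, $\andl$ or $\exl$ step the trace may pass into the subformula containing no occurrence of $\I\psi$ at all --- e.g.\ from the unfolding $\eveninv(\even,t) = t=0 \lor \exists y(\even(y) \land t = \succ\succ y)$ an $\orl$ step may send the trace to $t=0$. Then ``$\I\psi$ occurs positively in $\tau_{i+1}$'' fails in the sense in which the proposition is later used: in the soundness argument $\I\psi$ must actually be the greatest inductive predicate occurring in positive position in every $\tau_i$ with $i \geq k'$, since the ordinals $\alpha_i$ are defined from $\tau_i^{\alpha}$, which replaces exactly those positive occurrences. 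Your forward propagation only establishes ``no negative occurrences'', which is vacuous once $\I\psi$ has been dropped. To close the gap you must appeal to part (1): either argue \emph{backwards} from the next position at which the trace is principal for $\I\psi$ --- this is what the paper does, inducting on $i_j - i$ for the least principal position $i_j \geq i$, where occurrence is automatic because $\tau_{i_j} = \I\psi(t_j)$ --- or argue forwards that once all occurrences of $\I\psi$ are lost they can never reappear (after stabilisation no $\I\chi > \I\psi$ occurs in the trace, so no unfolding can resurface $\I\psi$), contradicting the infinitely many principal $\idl\psi$ steps. Either way an additional step beyond polarity-preservation is needed; as written, the claimed forward invariant is simply not the statement (2) that the soundness proof consumes.
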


\begin{proof} 
    Since $(\tau_i)_{i\geq k}$ is progressing it is infinitely often principal, and so must be infinitely often principal for inductive predicates, i.e.\ for formulas of the form $\I\phi(t)$, since a trace through at any other formula, when principal, decreases in size.
    Furthermore, when $i\leq j$, we have by induction on $j-i$ that:
   \begin{equation}
       \label{prog-trace-ids-get-smaller}
       \text{if $\tau_i = \I\phi (t)$ and $\I{\chi}$ occurs in $\tau_j$, then $\I{\chi}\leq \I\phi$}
   \end{equation}
    
    Thus, in particular, if $i<j$ and $\tau_i = \I\phi(t)$ and $\tau_j = \I{\chi}(u)$ then $\I{\chi} \leq \I\phi$, and so by well-foundedness of $<$ on $\II{}$ there is a unique $\I\psi$ satisfying \cref{prog-trace-inf-often-id-critical}.

    Now, let $(\tau_{i_j})_{j<\omega}$ be a subsequence with each $\tau_{i_j} = \I\psi(t_j)$ principal, for some terms $t_j$.
    Setting $k' = i_0$, \cref{prog-trace-always-has-id-critical} is proved by induction on $i_j-i$ for least $i_j\geq i$.

    Finally \cref{prog-trace-id-critical-is-greatest} also follows from \eqref{prog-trace-ids-get-smaller} above, by setting $i= k' = i_0 \leq j$.
\end{proof}

\section{Soundness of non-wellfounded proofs}
\label{sec:soundness}
The main goal of this section is to prove the following result:
\begin{theorem}
	[Soundness]
	\label{soundness-of-nwf-proofs}
	If $\lidfinnoind \nwfproves \phi$ then $\N \models \phi$.
\end{theorem}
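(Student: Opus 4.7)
The plan is to argue by contradiction. Suppose $\pi$ is an $\lidfinnoind$-proof of a sequent $\Gamma \seqar \Delta$ that is falsified in $\N$, say by some interpretation $\rho$ of the free variables making every formula in $\Gamma$ true and every formula in $\Delta$ false. The first step is a routine \emph{local descent} lemma: for each rule of $\lidfinnoind$, whenever the conclusion is falsified by some $\rho$, some premise is falsified by a closely related $\rho'$ (adjusting for fresh eigenvariables and for substitutions), and this can be arranged so that any LHS formula of the premise which is an immediate ancestor of a true LHS formula of the conclusion remains true. By dependent choice this produces an infinite branch $(v_i)_{i < \omega}$ of $\pi$ together with assignments $\rho_i$ falsifying $\pi(v_i)$.

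Now fix a progressing trace $(\tau_i)_{i \geq k}$ along this branch. Each $\tau_i$ sits on the LHS of $\pi(v_i)$, so $\tau_i$ is true under $\rho_i$. By \Cref{prog-trace-properties}, past some $k' \geq k$ there is a unique inductive predicate $\I\psi$ that occurs positively in every $\tau_i$, is $<$-greatest among all inductive predicates occurring in any $\tau_i$ ($i \geq k'$), and satisfies $\tau_i = \I\psi(t_i)$ principally for infinitely many $i$. The greatness clause is essential: it guarantees that every inductive predicate $\I\chi$ properly nested inside $\tau_i$ has $\I\chi < \I\psi$ and so may safely be interpreted by $\Ninterp{\I\chi}$ outright; only $\I\psi$ needs ordinal approximation.

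The key move is the ordinal assignment. By \Cref{char-fps-by-closure-ordinal-approximants}, $\Ninterp{\I\psi} = \approx{(\Ninterp{\psi})}{\alpha^*}(\emptyset)$ for some ordinal $\alpha^*$. Positivity of $\I\psi$ in each $\tau_i$ (for $i \geq k'$) makes truth monotone in the approximant stage, so there is a least ordinal $\alpha_i \leq \alpha^*$ such that $\tau_i$ becomes true under $\rho_i$ when $\I\psi$ is interpreted as $\approx{(\Ninterp{\psi})}{\alpha_i}(\emptyset)$ (and every $\I\chi < \I\psi$ as $\Ninterp{\I\chi}$). A rule-by-rule inspection then verifies: $\alpha_{i+1} \leq \alpha_i$ at every step of the trace, and $\alpha_{i+1} < \alpha_i$ whenever $\pi(v_i)$ is an $\idl\psi$ step principal on $\tau_i$. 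For this strict case, unfolding $\I\psi(t)$ to $\psi(\I\psi,t)$ and using the minimality of $\alpha_i$ together with the successor case of the approximant recurrence \eqref{inflationary-construction} forces the approximation stage needed for the outer occurrences of $\I\psi$ to drop strictly. Infinitely many such principal steps then give an infinite descending sequence of ordinals, contradicting well-foundedness.

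The main obstacle is the rule-by-rule verification of the ordinal-descent invariant, in particular for rules that change the syntactic shape of trace formulas (substitution and $\eql$), and for the handling of smaller inductive predicates $\I\chi < \I\psi$ nested within $\tau_i$, whose interpretations must be treated by the ordinary semantics while the single outer $\I\psi$ is approximated. Setting up auxiliary notation for ``interpret $\I\psi$ at stage $\beta$ and interpret all smaller $\I\chi$ by $\N$'' is what makes the invariant manageable and is the one piece of bookkeeping that needs care; everything else reduces to straightforward cases. I should also note that this informal, metatheoretic soundness argument relies on dependent choice and on working with ordinals directly; this is precisely why \Cref{sec:ids-truth-so-arith,sec:approxs+tr-in-so-arith,sec:cid-to-id} later re-engineer the same argument inside a fragment of second-order arithmetic.
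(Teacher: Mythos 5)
Your overall strategy coincides with the paper's: build a falsifying branch by reflecting falsity rule by rule, fix a progressing trace, use \Cref{prog-trace-properties} to isolate the $<$-greatest predicate $\I\psi$, assign to each trace formula the least stage at which it holds when $\I\psi$ is read as an approximant, and contradict well-foundedness via infinitely many strict drops at $\idl\psi$ steps (\Cref{unfolding-decreases-approx}, successor case of \eqref{inflationary-construction}). There is, however, a genuine gap in the order of the construction. You first build the branch with a descent lemma that only preserves \emph{truth} of immediate ancestors, and only afterwards claim that ``a rule-by-rule inspection verifies $\alpha_{i+1}\leq\alpha_i$''. For a branch fixed in this way the verification can fail: at an $\orl$ step whose principal formula $\chi_1\lor\chi_2$ lies on the trace, both disjuncts may be true under $\rho_i$, while only one of them holds at the least stage $\alpha_i$ at which the disjunction holds; if the truth-based construction happened to select the premise of the other disjunct, then $\alpha_{i+1}$ (the least stage for that disjunct) strictly exceeds $\alpha_i$, the sequence $(\alpha_i)_{i\geq k'}$ is no longer non-increasing, and infinitely many strict decreases no longer yield a contradiction. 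The difficulties you do flag (substitution, $\eql$, smaller predicates $\I\chi<\I\psi$) are real but routine; the premise-selection issue is the one the argument actually hinges on.

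The repair is exactly what the paper builds into \Cref{reflecting-falsity}: when choosing the premise and assignment one must evaluate the auxiliary formulas at the least ordinal approximating the principal formula, so that for every immediate-ancestor pair and \emph{every} ordinal $\alpha$ one has $\rho\models\phi^\alpha\implies\rho'\models\psi^\alpha$ (\cref{false-branch-approx-decreases}), unless the greatest inductive predicate strictly decreases (\cref{false-branch-id-decreases}). With this invariant secured at branch-construction time, non-increase of the least stages along any later-chosen trace is immediate from monotonicity and \Cref{pos-form-approx-increasing}, and your remaining steps go through as in the paper. (The paper also eliminates substitution steps beforehand, \Cref{subst-admissibility}, which removes the bookkeeping you mention for $\theta$ and keeps the ``$\rho'$ extends $\rho$'' clause clean.)
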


Before proving this, it is convenient to omit consideration of substitutions in preproofs:

\begin{proposition}
[Admissibility of substitution]
\label{subst-admissibility}
If there is a (non-wellfounded) $\lidfinnoind$-proof of a sequent $\Gamma \seqar \Delta$, then there is one not using the substitution rule.
\end{proposition}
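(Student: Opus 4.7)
The plan is to eliminate substitutions by pushing them upward through the proof, defining coinductively a substitution-free preproof. I would define an operation $F(\pi, \theta)$ which, given a $\lidfinnoind$-proof $\pi$ of $\Gamma \seqar \Delta$ and a substitution $\theta$, returns a substitution-free preproof of $\theta(\Gamma) \seqar \theta(\Delta)$; the desired witness is then $F(\pi, \id)$. The construction proceeds by case analysis on the root rule of $\pi$: if the root is a substitution $\sigma$ applied to $\pi'$, absorb it into the accumulator by setting $F(\pi, \theta) \dfn F(\pi', \theta \circ \sigma)$; for any other rule $r$ with premises $\pi_1, \dots, \pi_n$, apply the image of $r$ under $\theta$ (with alpha-renaming of eigenvariables in the $\exl, \allr$ cases) to children $F(\pi_1, \theta), \ldots, F(\pi_n, \theta)$. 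For initial sequents the result is again an initial sequent since the axioms of $\Q$ are closed and $\id$-axioms are closed under substitution; the only subtle commutation is $\eql$, for which one checks that the designated positions of $s$ and $t$ in the ambient context remain well-defined after $\theta$ is applied.

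Productivity of this coinductive definition is the critical ingredient. The observation is that no valid non-wellfounded proof contains an infinite branch consisting entirely of substitution steps: substitution steps never count as principal, so any trace passing through such a branch fails to be principal infinitely often, and hence cannot be progressing. Thus along every path in $\pi$ only finitely many consecutive substitution rules may occur before a non-substitution rule is encountered, and $F$ emits a rule at every recursive step.

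For progress preservation, every infinite branch of $F(\pi, \id)$ corresponds canonically to an infinite branch of $\pi$, obtained by reinserting the absorbed substitution steps. Given a progressing trace $(\phi_i)_{i \geq k}$ along the latter, I would obtain a trace along the former by composing with the substitutions absorbed between consecutive non-substitution positions: over a maximal block of substitutions $\theta_i, \ldots, \theta_{j-1}$, the trace clauses yield $\phi_i = (\theta_i \circ \cdots \circ \theta_{j-1})(\phi_j)$, which exactly cancels the substitution accumulated upward in $F$, so the ordinary (non-substitution) trace relation at position $j$ holds unchanged. Since progress occurs only at non-substitution positions, which are precisely those retained by the correspondence, the derived trace is still progressing.

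The main obstacle is the careful bookkeeping around fresh variable conditions and trace correspondence. Eigenvariables in $\exl, \allr$ steps must be renamed consistently throughout a potentially infinite subtree to avoid capture by $\theta$; this is handled uniformly by drawing eigenvariables from a reserve indexed by path position. Verifying that traces composed with accumulated substitutions satisfy all four trace clauses (principal-auxiliary, $\eql$-swap, identity, and substitution) requires attentive inductive checks but no new ideas; the conceptual content lies entirely in the productivity argument above.
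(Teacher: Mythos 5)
Your proposal is correct and follows essentially the same route as the paper's own proof: a coinductive construction pushing substitutions upward (the accumulator $F(\pi,\theta)$ is just an explicit rendering of applying a meta-level substitution to the sub-preproof at each substitution step), with productivity secured by progressiveness, since no infinite branch can end in substitution steps only. Your additional bookkeeping on trace transfer and eigenvariable renaming simply fills in details the paper's sketch leaves implicit.
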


\begin{proof}
[Proof sketch]
    We proceed by a coinductive argument, replacing each substitution step by a meta-level substitution on the sub-preproof rooted at the premiss.
    Productivity is guaranteed by progressiveness: each infinite branch has, at least, infinitely many $\idl\phi$ steps.
\end{proof}

\subsection{Satisfaction with respect to approximants}
\label{sec:sat-wrt-approxs}
Before proceeding, let us build up a little more theory about approximants of (least) fixed points.
Let us temporarily expand the language $\langiid$ to include, for each inductive predicate symbol $\I\phi$ and each ordinal $\alpha$ a symbol $\Iord \alpha \phi$.
We do not consider these symbols `inductive predicates', but rather refer to them as \emph{approximant symbols}.
In the standard model, using the notation of \Cref{sec:prelims-syntax-semantics}, we set $\Ninterp{(\Iord\alpha\phi)} \dfn (\Ninterp{\phi})^\alpha(\emptyset)$.

For a formula $\phi$ of $\langiid$ whose $<$-greatest inductive predicate in positive position is $\I\psi$, we write $\phi^\alpha $ for the formula obtained from $\phi$ by replacing each positive occurrence of $\I\psi$ by $\Iord \alpha\psi$.
As an immediate consequence of the characterisation of least fixed points by unions of approximants, \Cref{char-fps-by-closure-ordinal-approximants}, we have:
\begin{corollary}
[of \Cref{char-fps-by-closure-ordinal-approximants}]
\label{pos-form-can-be-approxed}
If $\N \models \phi$ then there is an ordinal $\alpha $ such that $\N \models \phi^\alpha$.
\end{corollary}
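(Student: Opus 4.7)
The plan is to derive this directly from \Cref{char-fps-by-closure-ordinal-approximants}, since once we know the least fixed point of $\Ninterp\psi$ is attained by some approximant, the replacement indicated by $\phi^\alpha$ is semantically invisible in the standard model.

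First I would identify the relevant operator. Let $\I\psi$ be the $<$-greatest inductive predicate occurring positively in $\phi$, so that $\phi^\alpha$ is obtained by replacing positive occurrences of $\I\psi$ by $\Iord\alpha\psi$. By \Cref{pos-implies-mon}, since $\psi(X,x)$ is positive in $X$, the operator $\Ninterp\psi:\pow\Nat\to\pow\Nat$ is monotone. Note that this operator is well-defined independently of $\alpha$: any inductive predicates appearing in $\psi$ are strictly $<$-below $\I\psi$ and are interpreted by their standard meanings in $\N$, not via approximant symbols.

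Next I would apply \Cref{char-fps-by-closure-ordinal-approximants} to $\Ninterp\psi$ to obtain an ordinal $\alpha$ with $\lfp{\Ninterp\psi} = \approx{(\Ninterp\psi)}{\alpha}(\emptyset)$. Unpacking the semantics of both symbols, the left-hand side is $\Ninterp{\I\psi}$ (by the interpretation clause for inductive predicates together with \Cref{knaster-tarski-on-P(N)}), while the right-hand side is exactly $\Ninterp{\Iord\alpha\psi}$ by the definition of the approximant symbols. Hence for this choice of $\alpha$, the symbols $\I\psi$ and $\Iord\alpha\psi$ are interpreted identically in $\N$.

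The final step is then a routine semantic induction: replacing an atomic predicate symbol by another that denotes the same subset of $\Nat$ preserves the truth value of any formula, regardless of whether the replaced occurrences are positive or negative. Applied to $\phi$, this yields $\N\models\phi \iff \N\models\phi^\alpha$, and in particular $\N\models\phi$ gives $\N\models\phi^\alpha$. I do not anticipate any real obstacle here: the only point requiring care is to check that the monotone operator one feeds into \Cref{char-fps-by-closure-ordinal-approximants} is genuinely a function on $\pow\Nat$, which is where $\I\psi$ being $<$-maximal (so that all nested inductive predicates have already been fixed to their standard interpretations) gets used.
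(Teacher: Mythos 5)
Your proof is correct and follows essentially the route the paper intends: the paper treats this as an immediate consequence of \Cref{char-fps-by-closure-ordinal-approximants}, namely that at the closure ordinal $\alpha$ the approximant $\Iord\alpha\psi$ and $\I\psi$ denote the same subset of $\Nat$, so the substitution defining $\phi^\alpha$ preserves truth in $\N$. Your additional care about nested inductive predicates being fixed to their standard interpretations (so that $\Ninterp\psi$ is a genuine monotone operator on $\pow\Nat$) is exactly the right point to check and matches the paper's setup.
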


Note that, as a consequence of positivity implying monotonicity, we also have:
\begin{corollary}
[of \Cref{pos-implies-mon}]
\label{pos-form-approx-increasing}
    If $\alpha \leq \beta $ then $\N \models \phi^\alpha \limp \phi^\beta$.
\end{corollary}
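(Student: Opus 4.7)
The plan is to derive the corollary from two layers of monotonicity: ordinal-monotonicity of the approximant construction at $\emptyset$, and formula-level monotonicity provided by \Cref{pos-implies-mon} applied to the positive positions being substituted.

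First, I would establish by transfinite induction that, for any monotone $F : \pow\Nat \to \pow\Nat$, the family $\{\approx F \gamma (\emptyset)\}_\gamma$ is increasing in $\gamma$, i.e.\ $\approx F \alpha (\emptyset) \subseteq \approx F \beta (\emptyset)$ whenever $\alpha \leq \beta$. Starting from $\emptyset$ is what makes the iteration inflationary: a short subsidiary transfinite induction shows $\approx F \gamma (\emptyset) \subseteq \approx F {\gamma+1}(\emptyset)$, with both the successor and the limit cases handled by monotonicity of $F$ (at limits $\lambda$ one uses that each $\approx F \gamma (\emptyset) \subseteq \approx F \lambda (\emptyset)$ trivially by the union, then applies $F$). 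The global statement then follows by another transfinite induction on $\beta$. Applied to $F = \Ninterp\psi$, which is monotone by \Cref{pos-implies-mon}, this yields $\Ninterp{\Iord\alpha\psi} \subseteq \Ninterp{\Iord\beta\psi}$ whenever $\alpha \leq \beta$.

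Next, I would lift this set-theoretic inclusion to the formula $\phi$. Let $\I\psi$ be the $<$-greatest inductive predicate occurring positively in $\phi$, and mark its positive occurrences, writing $\phi = \phi(X)$ where $X$ is a fresh unary predicate symbol standing for precisely those marked positions. By construction $\phi(X)$ is positive in $X$, so \Cref{pos-implies-mon} guarantees that the associated operator $A \mapsto \Ninterp\phi (A)$ on $\pow \Nat$ is monotone. Now by definition $\phi^\gamma$ is exactly $\phi(X)$ with $\Iord\gamma\psi$ substituted for $X$, for $\gamma \in \{\alpha,\beta\}$: any negative occurrences of $\I\psi$ and all other predicate symbols in $\phi$ are untouched and thus appear identically in $\phi^\alpha$ and $\phi^\beta$. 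Feeding the inclusion $\Ninterp{\Iord\alpha\psi} \subseteq \Ninterp{\Iord\beta\psi}$ from the previous paragraph into this monotone operator then gives $\Ninterp{\phi^\alpha} \subseteq \Ninterp{\phi^\beta}$, which is precisely $\N \models \phi^\alpha \limp \phi^\beta$.

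The argument is essentially routine and I do not expect any substantial obstacle; the only step requiring care is the bookkeeping around positive versus negative occurrences of $\I\psi$, ensuring that only the marked positive positions need to be treated via the monotonicity principle, while negative occurrences of $\I\psi$ and unrelated predicate symbols cancel between $\phi^\alpha$ and $\phi^\beta$. Everything else reduces to a transfinite induction on ordinals and an invocation of \Cref{pos-implies-mon}.
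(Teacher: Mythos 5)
Your proposal is correct and follows exactly the route the paper intends (the corollary is stated without explicit proof, as a consequence of \Cref{pos-implies-mon} together with the fact that the approximants of \eqref{inflationary-construction} starting from $\emptyset$ form an increasing chain): first the ordinal-monotonicity of $\gamma \mapsto (\Ninterp\psi)^\gamma(\emptyset)$ by transfinite induction, then monotonicity in the marked positive occurrences of $\I\psi$ via \Cref{pos-implies-mon}. No gaps worth noting.
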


Finally, let us point out that, by the definition of the inflationary construction in \cref{inflationary-construction}, if $\Ninterp t \in \mu \Ninterp\phi$, then the least ordinal $\alpha$ with $\Ninterp t \in (\Ninterp \phi)^\alpha$ must be a successor ordinal.
Albeit rather immediate, we better state the following consequence of this reasoning:

\begin{observation}
\label{unfolding-decreases-approx}
If $\alpha,\beta$ are least s.t.\ $\N \models \Iord\alpha \phi (t)$ and $\N\models \phi(\Iord\beta \phi,t)$ respectively, then $\beta <\alpha$.
\end{observation}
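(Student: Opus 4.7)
The plan is to argue by a case analysis on $\alpha$ using the inflationary construction from \cref{inflationary-construction}. First, I would rule out that $\alpha$ could be $0$ or a limit ordinal, leaving $\alpha$ a successor, and then extract $\beta$ from the predecessor.

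For $\alpha=0$: here $(\Ninterp\phi)^0(\emptyset)=\emptyset$, so $\Ninterp t\notin\Ninterp{\Iord 0 \phi}$, contradicting the choice of $\alpha$. For $\alpha$ a limit: by the third clause of the inflationary construction, $(\Ninterp\phi)^\alpha(\emptyset)=\bigcup_{\gamma<\alpha}(\Ninterp\phi)^\gamma(\emptyset)$, so $\Ninterp t\in \Ninterp\phi^\alpha(\emptyset)$ forces $\Ninterp t\in(\Ninterp\phi)^{\gamma_0}(\emptyset)$ for some $\gamma_0<\alpha$, contradicting minimality of $\alpha$.

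Thus $\alpha=\gamma+1$ for some ordinal $\gamma$. By the successor clause of \cref{inflationary-construction} we have $(\Ninterp\phi)^\alpha(\emptyset)=\Ninterp\phi\bigl((\Ninterp\phi)^\gamma(\emptyset)\bigr)$, i.e.\ $\Ninterp t\in \Ninterp\phi(\Ninterp{\Iord\gamma\phi})$, which is to say $\N\models \phi(\Iord\gamma\phi,t)$. By minimality of $\beta$ we conclude $\beta\leq\gamma<\gamma+1=\alpha$, as required.

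There is no real obstacle here: once the case split on $\alpha$ is made, the successor case is just unpacking the definition of $\Iord{\gamma+1}\phi$. The only thing to be careful about is that the argument relies on $\alpha$ being chosen \emph{least}, which is what rules out the limit case; this is available by the well-ordering principle applied to the (nonempty) set of ordinals witnessing $\N\models\Iord\alpha\phi(t)$ guaranteed by \Cref{char-fps-by-closure-ordinal-approximants}.
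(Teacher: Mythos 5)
Your proposal is correct and matches the paper's own (very brief) justification: the paper likewise observes that, by the inflationary construction in \cref{inflationary-construction}, the least $\alpha$ with $\N\models\Iord\alpha\phi(t)$ cannot be $0$ or a limit and hence is a successor, from which $\beta\leq\gamma<\alpha$ follows by minimality of $\beta$ exactly as you argue. Your explicit case split and the remark about minimality ruling out the limit case simply spell out what the paper treats as immediate.
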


\subsection{Building countermodels}
\label{sec:reflecting-falsity}
An \emph{assignment} is a (partial) map $\rho$ from variables to natural numbers.

If $\phi$ is a formula and $\rho: \fv \phi \to \Nat$, we define $\N, \rho\models \phi $ (or simply $\rho \models \phi$) by simply interpreting free variables under $\rho$ in $\N$.
Formally, $\N,\rho \models \phi$ if $\N\models \phi \left[ {\rho(x)} / x \right]_{x\in \fv \phi}$.\footnote{Note here we are implicitly identifying natural numbers with their corresponding numerals.}

As a consequence of local soundness of the rules, as well as preserving truth we have that rules `reflect' falsity.
In fact we can say more:
\begin{lemma}
    [Reflecting falsity]
    \label{reflecting-falsity}
    Fix an inference step:
    \begin{equation}
        \label{infrule-n-ary}
        \vliiinf{\infrule}{}{\Gamma \seqar \Delta}{\Gamma_1 \seqar \Delta_1}{\cdots}{\Gamma_n \seqar \Delta_n}
    \end{equation}
    If $\rho \models \bigwedge \Gamma$ and $\rho\notmodels \bigvee \Delta$ then there is an assignment $\rho'$ and premiss $\Gamma' \seqar \Delta'$ with:
    \begin{enumerate}
    \item\label{false-branch-assignment-extends} $\rho'$ extends $\rho$, i.e.\ $\rho'(x) = \rho(x)$ for any $x$ in the domain of $\rho$;
        \item\label{false-branch-assignment-falsifies} $\rho'\models \bigwedge \Gamma'$ and $\rho' \notmodels \bigvee \Delta'$; 
        \item\label{false-branch-id-or-approx-decreases} if $\psi\in \Gamma'$ is an immediate ancestor of $\phi \in \Gamma$ then either:  
        \begin{enumerate}
            \item\label{false-branch-id-decreases} $\I{}, \I{}'$ are the greatest inductive predicates occurring in $\phi,\psi$ resp.\ and $\I{}'< \I{}$; or,
            \item\label{false-branch-approx-decreases} For any ordinal $\alpha$, we have $\rho \models \phi^\alpha \implies \rho'\models \psi^\alpha$.
        \end{enumerate}
    \end{enumerate}
\end{lemma}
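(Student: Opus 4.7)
The plan is to argue by case analysis on the inference rule $\infrule$. I would first dispatch the nullary rules (initial sequents, $\eqr$, $\nat$-r): their conclusions are semantically valid in $\N$, so the hypothesis of the lemma cannot be satisfied and the statement holds vacuously. For the remaining rules of $\LKeq$, having invoked \Cref{subst-admissibility} to ignore substitution, I would proceed in the standard Kleene-style reflection manner. For $\alll$, $\exr$, $\orr$, $\andr$, $\andl$, $\notl$, $\notr$, $\eql$, and weakening, I would take $\rho' = \rho$ and select a falsified premiss by the obvious case analysis. For $\exl$ and $\allr$, I would extend $\rho$ with a value for the freshly introduced eigenvariable that witnesses (respectively refutes) the principal formula. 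For $\cut$, I would branch on whether $\rho$ satisfies the cut formula. In each of these cases, condition (b) holds because the approximant notation $\phi^\alpha$ commutes with the connective being decomposed; when a principal formula splits so that one component has a strictly smaller greatest inductive predicate (e.g.\ in $\andl$ or $\orl$ where only one conjunct/disjunct contains the overall greatest IDP), condition (a) applies in place of (b) for that component.

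The interesting cases are the fixed-point rules. For $\idr\phi$, the principal formula $\I\phi(t)$ sits on the RHS, so every immediate ancestor on the LHS is literally itself and (b) is trivial with $\rho' = \rho$; to see the premiss is falsified, note that $\rho \not\models \I\phi(t)$ and $\I\phi$ is a prefixed point of $\Ninterp\phi$, forcing $\rho \not\models \phi(\I\phi, t)$. The crucial case is $\idl\phi$. Setting $\rho' = \rho$, falsification of the premiss $\Gamma, \phi(\I\phi, t) \seqar \Delta$ follows from $\I\phi$ being a postfixed point. The sole nontrivial ancestry is $\phi(\I\phi, t)$ as an immediate ancestor of $\I\phi(t)$; both formulas share the same greatest inductive predicate $\I\phi$ (since, by the inductive formation of $\langiid$, every inductive predicate appearing in the invariant $\phi$ is strictly below $\I\phi$), so condition (a) fails and I must verify (b). Given $\rho \models (\I\phi(t))^\alpha = \Iord\alpha\phi(t)$, I would pick the least such $\alpha'$, apply \Cref{unfolding-decreases-approx} to obtain $\alpha' = \beta + 1$ with $\rho \models \phi(\Iord\beta\phi, t)$, and invoke monotonicity (\Cref{pos-form-approx-increasing}, since $\beta < \alpha' \leq \alpha$) to upgrade this to $\rho \models \phi(\Iord\alpha\phi, t) = (\phi(\I\phi, t))^\alpha$.

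The main obstacle, though minor, is exactly the bookkeeping in this $\idl\phi$ case: condition (b) demands that the implication hold for the \emph{same} ordinal $\alpha$ on both sides, whereas unfolding an inductive predicate naturally \emph{decreases} the least witnessing approximant. \Cref{unfolding-decreases-approx} buys us a strictly smaller $\beta$ for the unfolded form, and monotonicity of the approximant sequence in the ordinal parameter lets us weaken back up to the original $\alpha$. This is exactly what will allow the subsequent soundness argument to track a \emph{single} ordinal along a progressing trace, rather than having to recompute a decreasing chain at each rule application.
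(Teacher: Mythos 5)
Your overall strategy (case analysis on the rule, with the fixed-point unfolding handled via least approximants) matches the paper's, and your $\idl\phi$ case is exactly right: take the least $\alpha'$ approximating $\I\phi(t)$, apply \Cref{unfolding-decreases-approx} to get a strictly smaller $\beta$ with $\phi(\Iord\beta\phi,t)$, and climb back up to the given $\alpha$ by \Cref{pos-form-approx-increasing}. The genuine gap is in the rules where a real \emph{choice} is made on the left: $\orl$ (which, incidentally, is missing from your list of ``$\rho'=\rho$'' rules) and the witness choice in $\exl$. You select the premiss, respectively the eigenvariable value, according to which disjunct or witness makes the principal formula true in the full semantics, and justify \cref{false-branch-approx-decreases} by saying that $(\cdot)^\alpha$ commutes with the connective. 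Commutation is not enough: \cref{false-branch-approx-decreases} must hold for \emph{every} ordinal $\alpha$, and the disjunct (or witness) that is true outright need not be the one witnessing small approximants. Concretely, take principal formula $\nat(5)\lor\nat(0)$ with both disjuncts true and choose the $\nat(5)$-premiss: at $\alpha=1$ we have $\rho\models\Iord{1}{\natinv}(5)\lor\Iord{1}{\natinv}(0)$ via the right disjunct (since $\Ninterp{(\Iord{1}{\natinv})}=\{0\}$), yet $\rho\notmodels\Iord{1}{\natinv}(5)$, so condition \cref{false-branch-approx-decreases} fails for that premiss; the analogous failure occurs for $\exl$ on $\exists x\, \nat(x)$ with witness $5$.

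The repair is precisely the point the paper's proof discussion flags as the crux of this lemma: for the principal formula $\sigma$ of an $\orl$ (or $\exl$) step, first take the \emph{least} ordinal $\alpha^*$ with $\rho\models\sigma^{\alpha^*}$ (it exists by \Cref{pos-form-can-be-approxed} and well-ordering), choose the disjunct/witness that holds already at level $\alpha^*$, and then deduce \cref{false-branch-approx-decreases} for an arbitrary $\alpha$ from the fact that $\{\alpha : \rho\models\sigma^\alpha\}$ is upward closed (\Cref{pos-form-approx-increasing}): any such $\alpha$ satisfies $\alpha\geq\alpha^*$, so the chosen component holds at level $\alpha$ as well. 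In other words, the least-ordinal-plus-monotonicity trick you correctly deployed for $\idl\phi$ is needed again at every branching/witnessing left rule, whereas you present $\idl\phi$ as the only subtle case. With that amendment your argument goes through and coincides with the intended proof; your remaining cases (nullary rules vacuous, $\cut$ by branching on the cut formula, which generates no ancestry obligations, eigenvariable extension for $\allr$, and the prefixed-point argument for $\idr\phi$) are fine.
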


The proof is similar to analogous results in \cite{Simpson17:cyc-arith,Das20:log-comp-cyc-arith}, however we must also take care to maintain the invariant \Cref{false-branch-id-or-approx-decreases} during the construction. 
An important distinction here is that, for \cref{false-branch-approx-decreases}, we must find the least ordinal approximating the principal formula of, say a $\lor$-left step, and evaluate auxiliary formulas with respect to this ordinal in order to appropriately choose the correct premiss.
The required property then follows by monotonicity, \Cref{pos-implies-mon}, and the fact that approximants form an increasing chain, cf.~\Cref{inflationary-construction}.
The necessity of this consideration is similar to (but somewhat simpler than) analogous issues arising in the cyclic proof theory of the modal $\mu$-calculus, cf.~\cite{NW96:games-for-mu,Studer08:pt-for-mu,AfshariLeigh17:cut-free-mu}.

\subsection{Putting it all together}
\label{sec:prf-of-soundness-of-cid}
We are now ready to prove the main result of this section.

\begin{proof}
    [Proof of \Cref{soundness-of-nwf-proofs}]
    Let $\pi$ be a (non-wellfounded) $\lidfinnoind$ proof of the sequent $\seqar \phi$ and suppose, for contradiction, that $\N\notmodels \phi$.
    We define a branch $(v_i)_{i< \omega}$ and assignments $(\rho_i)_{i<\omega}$ by setting:
    \begin{itemize}
        \item $\rho_0 \dfn \emptyset$ and $v_0 \dfn \epsilon$ (the root of $\pi$);\footnote{We assume here that $\phi$ is closed, i.e.\ has no free variables.}
        \item appealing to \Cref{reflecting-falsity}, if $\pi(v_i)$ has form \eqref{infrule-n-ary}, we set $v_{i+1}$ s.t.\ $\pi(v_{i+1}) $ has conclusion $\Gamma' \seqar \Delta'$ and $\rho_{i+1} \dfn \rho_i'$. 
        
    \end{itemize}

    By assumption that $\pi$ is progressing, let $(\tau_i)_{i\geq k}$ be a progressing trace along $(v_i)_{i<\omega}$, and let $\alpha_i$ be the least ordinals such that $\N \models \tau_i^{\alpha_i}$ for $i\geq k$.

    Now, let $k'\geq k$ and $\I\psi$ be obtained from $(\tau_i)_{i\geq k}$ by \Cref{prog-trace-properties}.
    By \cref{prog-trace-always-has-id-critical,prog-trace-id-critical-is-greatest} of \Cref{prog-trace-properties} we have that $\I\psi$ is the greatest inductive predicate occurring (positively) in each $\tau_i$, for $i\geq k'$, and so \cref{false-branch-id-decreases} of \Cref{reflecting-falsity} never applies (for $i\geq k'$).
    Thus, by \Cref{pos-implies-mon}, we have $\alpha_{i+1}\leq \alpha_i$ for $i\geq k'$.

    On the other hand, at any $\idl \psi$ step where $\tau_i$ is principal, for $i\geq k'$, we must have that $\alpha_{i+1}<\alpha_i$ by \Cref{unfolding-decreases-approx}.
    Since this happens infinitely often, by \cref{prog-trace-inf-often-id-critical} of \Cref{prog-trace-properties}, we conclude that $(\alpha_i)_{i\geq k'}$ is a monotone non-increasing sequence of ordinals that does not converge, contradicting the well-foundedness of ordinals.
\end{proof}

\section{Inductive definitions and truth in second-order arithmetic}
\label{sec:ids-truth-so-arith}
The remainder of this paper is devoted to proving the converse of \Cref{id-to-cid}.
For this, we are inspired by the ideas of previous work \cite{Simpson17:cyc-arith,Das20:log-comp-cyc-arith}, using `second-order' theories to formalise the metatheorems of cyclic systems (namely soundness), and then appealing to conservativity results.
However the exposition here is far more involved than the analogous ones for arithmetic.

For this reason, we rather rely on a formalisation of the `theory of recursive ordinals' (with parameters) in $\PCA$, and formalise the soundness argument abstractly in this way.

\subsection{Subsystems of second-order arithmetic and inductive definitions}
\label{sec:subsyss-so-arith}
\label{sec:ids-in-pca}

We shall work with common subsystems of second-order arithmetic, as found in textbooks such as \cite{Simpson99:monograph}, and assume basic facts about them.

In particular, recall that $\ACA$ is a two-sorted extension of basic arithmetic by:

\begin{itemize}
    \item \emph{Arithmetical comprehension.} $\exists X \forall x (X(x) \liff \phi(x))$ for each arithmetical formula $\phi(x)$.
    \item \emph{Set induction.} $\forall X (X(0) \limp \forall x (X(x) \limp X(\succ x)) \limp \forall x X(x)) $
\end{itemize}

From here
$\PCA$ is the extension of $\ACA$ by the comprehension schema for all $\Pij 1 1 $ formulas.
It is well-known that $\PCA$ proves also the $\Sij 1 1 $-comprehension scheme, a fact that we shall freely use, along with other established principles, e.g.\ from \cite{Simpson99:monograph}.

We can interpret $\langiid$ into the language of second-order arithmetic by:
\begin{equation}
    \label{eq:id-as-pi11-formula}
    \I\phi (t)
\quad \dfn \quad
\forall X ((\forall x \phi(X,x) \limp X(x)) \limp X(t))
\end{equation}
This interpretation induces a bona fide (and well-known) encoding of $\IDfin$ within $\PCA$,
and we shall henceforth freely use (arithmetical) inductive predicates when working within $\PCA$, always understanding them as abbreviations under \eqref{eq:id-as-pi11-formula}.
In fact,
we can make a stronger statement.
Not only does $\PCA$ extend $\IDfin$ arithmetically, it does so conservatively:

\begin{theorem}
    [E.g., \cite{iid-book}]
    \label{pca-conservative-over-idfin}
    $\PCA$ is arithmetically conservative over $\IDfin$.
\end{theorem}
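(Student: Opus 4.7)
The plan is to prove conservativity in two directions, one routine and one substantial, and to defer the technical heart to the cited monograph.

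For the embedding $\IDfin \subseteq \PCA$ (the easy direction), I would use the interpretation \eqref{eq:id-as-pi11-formula}, reading each inductive predicate as a $\Pij 1 1$ formula. Under this reading, the least pre-fixed point schema $\forall x(\phi(\psi,x) \limp \psi(x)) \limp \forall x(\I\phi(x) \limp \psi(x))$ falls out of the quantifier structure: we simply instantiate the universal set quantifier in the interpretation of $\I\phi(x)$ with the set given by arithmetic comprehension on $\psi$. The pre-fixed point axiom similarly requires only arithmetical reasoning under this interpretation, and the induction schema extended to the richer language is subsumed by full set induction in $\PCA$.

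For the arithmetic conservativity direction, I would follow the classical proof-theoretic route. The strategy has three stages: first, embed $\PCA$ into a semi-formal infinitary calculus with an $\omega$-rule and explicit rules for $\Pij 1 1$-comprehension; second, perform partial cut-elimination (or collapsing) to reduce a $\PCA$-proof of an arithmetic sentence to a derivation whose remaining cuts are arithmetical and whose height is bounded by a recursive ordinal with a tractable notation; third, formalise this transformation within $\IDfin$, exploiting its ability to verify transfinite induction along the relevant notations by expressing well-foundedness as (an iterated) inductive definition.

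The main obstacle is the ordinal-theoretic machinery required in the second and third stages. Concretely, one needs a system of ordinal notations---typically built from Buchholz's $\psi$-collapsing functions---strong enough to bound proofs after $\Pij 1 1$-cut elimination, yet whose well-foundedness can be certified within $\IDfin$ by matching the collapsing hierarchy against the stratification of $\IDn n$ for varying $n$. This coordination between impredicative collapsing and iterated inductive definitions is precisely the content of the Buchholz--Feferman--Pohlers--Sieg monograph, so I would import the result as a black box rather than reconstruct the full ordinal analysis here.
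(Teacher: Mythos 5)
Your proposal matches the paper's treatment: the paper does not prove this theorem at all, explicitly citing the Buchholz--Feferman--Pohlers--Sieg monograph and using the conservativity result as a ``black box'', which is exactly where your argument also ends up after sketching the standard embedding and the classical ordinal-analytic route. The additional details you give (the $\Pij 1 1$ reading of $\I\phi$ for the embedding, and collapsing in an infinitary calculus for conservativity) are consistent with the standard proof the paper defers to, so there is nothing to correct.
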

This is a nontrivial but now well-known result in proof theory whose details we shall not recount.
We will use this result as a `black box' henceforth.

\subsection{Satisfaction as an inductive definition}
\label{sec:rel-sat-as-id}
As usual, there is no universal (first-order) truth predicate for a predicate language, for Tarskian reasons.
However we may define \emph{partial} truth predicates for fragments of the language.
In a language closed under inductive definitions, this is particularly straightforward since satisfaction itself is inductively defined (at the meta level).
In what follows
we will employ standard metamathematical notations and conventions for coding, e.g.\ we write $\code E$ for the G\"odel code of an expression $E$.

Also, when it is not ambiguous, we shall typically use the same notation for meta-level objects/operations and their object-level (manipulations on) codes, as a convenient abuse of notation.

\begin{figure}
    \centering
     \[
    \forall \rho,m, \vec A
    \left(
    \sat{\vec X}(\rho,m,\vec A) \quad \liff \quad 
    \begin{array}{rl}
        & m = \code{s=t} \land \rho(s) = \rho(t) \\
    \lor & m = \code{s<t} \land \rho ( s) <\rho(t)  \\
    \lor & m = \code{\phi \lor \psi} \land \left(\sat{\vec X} (\rho,\code \phi, \vec A) \lor \sat{\vec X} (\rho,\code \psi, \vec A)\right) \\
    \lor & m = \code{\phi \land \psi} \land \left(\sat{\vec X} (\rho,\code \phi,\vec A) \land \sat{\vec X} (\rho,\code \psi, \vec A)\right) \\
    \lor & m = \code{\exists x \phi} \land \exists n\, \sat{\vec X} (\rho \{x\mapsto n\}, \code \phi, \vec A)\\
    \lor & m = \code{\forall x \phi} \land \forall n\, \sat{\vec X} (\rho \{x\mapsto n\}, \code \phi, \vec A)\\
    \noalign{\smallskip}
    \lor & \bigvee\limits_{i=1}^k (m = \code{X_i(t)} \land \rho(t) \in A_i ) \\
    \lor & \bigvee\limits_{i=1}^k (m = \code{\lnot X_i(t)} \land \rho(t) \notin A_i )
    \end{array}
    \right)
    \]
    \caption{Inductive characterisation of the satisfaction predicate.}
    \label{sat-characteristic-props}
\end{figure}

\begin{proposition}
    [Formalised relative satisfaction]
    \label{rel-sat-formalised}
    Let $\vec X = X_1, \dots, X_k$ be a sequence of set symbols.
    There is a $\Pij 1 1 $ formula 
     $\sat{\vec X}(\rho,m,\vec A)$ such that $\PCA$ proves the characterisation in \Cref{sat-characteristic-props}

    for $\phi,\psi$ ranging over arithmetical formulas over $\vec X$.
\end{proposition}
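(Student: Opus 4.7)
The plan is to construct $\sat{\vec X}$ as the least fixed point of an arithmetical operator on codes of formulas, expressed via the standard $\Pij 1 1$ encoding from \eqref{eq:id-as-pi11-formula}. Let $\Phi(\rho, m, S, \vec A)$ denote the arithmetical formula obtained from the right-hand side of the biconditional in \Cref{sat-characteristic-props} by replacing each recursive call $\sat{\vec X}(\rho', m', \vec A)$ by the atom $S(\rho', m')$. The associated operator $S \mapsto \{(\rho, m) : \Phi(\rho, m, S, \vec A)\}$ is positive in $S$; note that the parameter sets $\vec A$ occur both positively and negatively (in the clauses for $\code{X_i(t)}$ and $\code{\lnot X_i(t)}$), but since they are not the recursion variable this is harmless. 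I would then define
\[
\sat{\vec X}(\rho, m, \vec A) \;\dfn\; \forall S \bigl(\forall \rho' \forall m' (\Phi(\rho', m', S, \vec A) \limp S(\rho', m')) \limp S(\rho, m)\bigr),
\]
which is manifestly $\Pij 1 1$ in $\rho, m, \vec A$, mirroring the interpretation of inductive predicates given in \eqref{eq:id-as-pi11-formula}.

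To establish the characterising biconditional within $\PCA$, I would verify both the pre-fixed point direction $\Phi(\rho, m, \sat{\vec X}, \vec A) \limp \sat{\vec X}(\rho, m, \vec A)$ and the post-fixed point direction $\sat{\vec X}(\rho, m, \vec A) \limp \Phi(\rho, m, \sat{\vec X}, \vec A)$. The former follows directly from the definition, using monotonicity of $\Phi$ in $S$ to transport $\Phi(\rho, m, \sat{\vec X}, \vec A)$ through the universal quantifier over pre-fixed points $S$. For the latter, I would appeal to $\Pij 1 1$-comprehension (available in $\PCA$) to form $S' \dfn \{(\rho, m) : \Phi(\rho, m, \sat{\vec X}, \vec A)\}$, which inherits $\Pij 1 1$ definability from $\sat{\vec X}$ since $S$ occurs positively in $\Phi$. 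Monotonicity combined with the pre-fixed point direction yields $S' \subseteq \sat{\vec X}$, and then a second application of monotonicity shows $S'$ is itself a pre-fixed point of $\Phi$. Minimality of $\sat{\vec X}$ then forces $\sat{\vec X} \subseteq S'$, which is precisely the post-fixed point direction.

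The main technical effort here is largely bookkeeping around the primitive-recursive coding of syntax: formula codes, assignments, substitution and the `update' operation $\rho\{x\mapsto n\}$ must all be set up so that each recursive call inside $\Phi$ is genuinely indexed by a strictly smaller formula code, so that the operator is well-defined arithmetically in $S$ and $\vec A$. The only genuinely \emph{logical} subtlety is the mixed polarity of $\vec A$: to keep $\sat{\vec X}$ at complexity $\Pij 1 1$ one must ensure $\vec A$ appears only as set parameters (never as a recursion variable), which the formulation above does automatically. Once these coding matters are discharged, the remainder of the argument is uniform and relies only on the standard toolkit of $\Pij 1 1$-inductive definitions in $\PCA$ underlying \Cref{pca-conservative-over-idfin}.
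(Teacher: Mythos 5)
Your proposal is correct and matches the paper's approach: the paper likewise reads the right-hand side of the characterisation as a positive arithmetical inductive definition of $\sat{\vec X}$, takes the $\Pij 1 1$ definition via \eqref{eq:id-as-pi11-formula}, and concludes by a formalised Knaster--Tarski theorem in $\PCA$. The only difference is that the paper cites that formalised fixed-point theorem as a black box, whereas you inline its standard proof (pre-fixed point by monotonicity, post-fixed point via the comprehension-defined set $\Phi(\sat{\vec X})$ and minimality), which is exactly the argument behind it.
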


\begin{proof}
[Proof sketch]
    The RHS of the formula displayed induces a positive arithmetical inductive definition of $\sat{\vec X}$, whence we conclude by \Cref{kt-formalised}.
\end{proof}

\begin{corollary}
    [Reflection, $\PCA$]
    \label{reflection}
    For any arithmetical formula $\phi(\vec X, \vec x)$ with all free first-order variables displayed, we have $\sat{\vec X}(\rho, \code{\phi(\vec X,\vec x)}, \vec A) \liff \phi(\vec A, \rho(\vec x))$.
\end{corollary}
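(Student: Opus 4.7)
The plan is to proceed by a meta-level induction on the structure of $\phi$, which yields a \emph{schema} of $\PCA$-theorems — one per arithmetical formula — rather than a single first-order statement (as Tarskian considerations would forbid). At each case the strategy is uniform: expand the left-hand occurrence of $\sat{\vec X}$ via the defining equivalence from \Cref{sat-characteristic-props} (which is a theorem of $\PCA$ by \Cref{rel-sat-formalised}), and then match the resulting disjunct against the shape of $\phi$, appealing to the inductive hypothesis on immediate subformulas. Since formulas are taken in De Morgan normal form, negations occur only at atoms and so the characterisation in \Cref{sat-characteristic-props} suffices to cover every connective.

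For the base cases, atomic formulas of the form $s=t$ and $s<t$ (and their negations, handled dually) reduce to a routine $\PCA$-lemma stating that the internal evaluation of a coded term $t$ under the coded assignment $\rho$ agrees with the object-level value of $t[\rho(\vec x)/\vec x]$; the predicate cases $X_i(t)$ and $\lnot X_i(t)$ fall out directly from the characterisation, once $X_i$ is interpreted by the parameter $A_i$. For the inductive cases on $\lor$ and $\land$, applying the IH to each of the two (strictly smaller) immediate subformulas and distributing the biconditional yields the desired equivalence at once. For the quantifier cases $\exists y\,\psi$ and $\forall y\,\psi$, the characterisation introduces an internal assignment-update $\rho\{y\mapsto n\}$; invoking the IH on $\psi$, viewed as a formula with one extra displayed first-order variable, turns each disjunct $\sat{\vec X}(\rho\{y\mapsto n\},\code\psi,\vec A)$ into $\psi(\vec A,\rho(\vec x),n)$, whence closing over $n$ yields $\phi(\vec A,\rho(\vec x))$.

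The main obstacle I anticipate is purely bureaucratic: one must verify that the internal assignment-update operation $\rho\{y\mapsto n\}$ commutes correctly with coded term-evaluation and with the coding of substitution into $\psi$. This is a standard but tedious component of any formalisation of satisfaction, and in practice it is packaged as an auxiliary $\PCA$-lemma and silently invoked during the quantifier step. Beyond that, the only thing to be careful about is flagging clearly that the induction is on the external syntactic structure of $\phi$, not a formalised structural induction inside $\PCA$, so that the result is understood as a schema indexed by arithmetical formulas rather than a single universally quantified theorem.
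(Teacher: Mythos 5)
Your proof is correct and is essentially the argument the paper intends (the paper omits the proof): an external, meta-level induction on the structure of $\phi$, unfolding the $\PCA$-provable characterisation of $\sat{\vec X}$ from \Cref{sat-characteristic-props} at each step, invoking the inductive hypothesis at connectives and quantifiers (with the assignment-update bookkeeping you mention), and a routine coded-term-evaluation lemma at the atoms, yielding a schema indexed by arithmetical formulas. The only point worth flagging is that the characterisation as displayed has no clauses for negated arithmetic atoms $\lnot s=t$ and $\lnot s<t$, so your ``handled dually'' step tacitly supplies clauses that the figure omits but that must be present for the corollary to hold for formulas in De Morgan normal form.
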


\section{Approximants and transfinite recursion in second-order arithmetic}
\label{sec:approxs+tr-in-so-arith}
Throughout this section we shall fix a list $\vec X$ of set variables that may occur as parameters in all formulas.
We shall almost always suppress them.
We work within $\PCA$ throughout this section, unless otherwise stated.

\subsection{Order theory and transfinite recursion in second-order arithmetic}
\label{sec:order-theory-in-so-arith}
We assume some basic notions for speaking about (partial) (well-founded) orders in second-order arithmetic, and some well-known facts about them.
Definitions and propositions in this section have appeared previously in the literature, e.g., \cite{Simpson99:monograph}.

A \emph{(binary) relation} is a set symbol $R$, construed as a set of pairs, with \emph{domain} $\dom R \dfn \{ x : R(x,x)\}$.
We write simply $x\leq_R y$ for $x \in \dom R \land y \in \dom R \land R(x,y)$ and $x<_R y \dfn x\leq_R y \land \lnot x=y$.

We write:
\begin{itemize}
    \item $\LO(R)$ for an arithmetical formula stating that $<_R$ is a linear order on $\dom R$.
    \item $\WF(R)$ for a $\Pij 1 1 $-formula stating that $<_R$ is well-founded on $\dom R$.
    \item $\WO(R) \dfn \LO(R) \land \WF(R)$. (``$R$ is a well-order'')
    \item $R<_\WO R'$ if $\WO(R), \WO(R')$ and there is an order preserving bijection from $ R$ onto a proper initial segment of $R'$. 
    
    ($<_\WO$ is provably $\Dij 1 1 $ within $\PCA$).
\end{itemize}

We have, already in $\ACA$, transfinite induction (for sets) over any well-order:
\begin{proposition}
    \label{transfinite-set-induction-in-aca}
    $\forall X,R (\WO(R) \limp \forall a \in \dom R\, (\forall b<_R a\, X(b) \limp X(a)) \limp \forall a \in \dom R \, X(a))$
\end{proposition}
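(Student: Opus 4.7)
The plan is to carry out the standard proof by contradiction, formalised within $\ACA$ (which is enough, since it is contained in $\PCA$). Fix a set $X$ and relation $R$, assume $\WO(R)$ together with the progressiveness hypothesis $\forall a \in \dom R\,(\forall b <_R a\, X(b) \limp X(a))$, and suppose for contradiction that $\lnot X(a_0)$ for some $a_0 \in \dom R$.

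The first step is to form the set of counterexamples $Y \dfn \{b \in \dom R : \lnot X(b)\}$ by arithmetical comprehension with $X$ and $R$ as parameters; note that $a_0 \in Y$, so $Y$ is nonempty. The second step is to appeal to well-foundedness of $<_R$ on $\dom R$ to extract a $<_R$-minimal element $a \in Y$. By minimality, every $b <_R a$ satisfies $b \notin Y$, and since $b <_R a$ implies $b \in \dom R$ this gives $X(b)$. The progressiveness hypothesis applied at $a$ then yields $X(a)$, contradicting $a \in Y$.

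The only point of care is the extraction of the minimal element. If $\WF(R)$ is formalised via the ``every nonempty subset of $\dom R$ has a $<_R$-minimal element'' schema, the step is immediate, since $Y$ was produced by arithmetical comprehension and hence is a legitimate set parameter. If instead $\WF(R)$ is given in the ``no infinite descending sequence'' formulation, we invoke the standard fact (provable in $\ACA$ by a short arithmetical-dependent-choice argument applied to $Y$) that the two formulations are equivalent. No further resources are needed, and in particular we do not use any of the $\Pij 1 1 $-comprehension strength of $\PCA$ for this proposition.
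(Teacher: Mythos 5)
Your proof is correct: the paper itself gives no proof of this proposition, deferring to the literature (e.g.\ Simpson's monograph, its cited source for this section), and your argument is exactly the standard one it has in mind—form the counterexample set by arithmetical comprehension and extract a $<_R$-minimal element. You also correctly isolate the only delicate point, namely that with $\WF(R)$ formalised as ``no infinite $<_R$-descending sequence'' one must convert a nonempty set without minimal element into a descending sequence (e.g.\ by the least-witness recursion, whose graph is arithmetical in $X,R$), which is where the $\ACA$-strength suffices and no $\Pij 1 1$-comprehension is needed.
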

More importantly we have that the class of well-orders itself is well-founded under comparison:
\begin{proposition}
    [Well-orders are well-ordered, $\ATR_0$]
    \label{wo-is-well-ordered}
    If $F: \Nat \to \WO$ then there is $n \in \Nat$ with $ F(n+1) \not<_\WO F(n) $
\end{proposition}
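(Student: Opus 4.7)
The plan is to argue by contradiction: suppose $F(n+1) <_\WO F(n)$ for every $n \in \Nat$, and construct a strictly $<_{F(0)}$-descending sequence in $\dom{F(0)}$, contradicting $\WF(F(0))$.

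First I would extract, uniformly from $(F(n))_{n\in\Nat}$, a sequence of embeddings $(f_n)_{n\in\Nat}$ with each $f_n : \dom{F(n+1)} \to \dom{F(n)}$ an order-preserving bijection onto a proper initial segment of $F(n)$. Existence of each $f_n$ is immediate from $F(n+1) <_\WO F(n)$, and uniqueness is the familiar consequence of transfinite induction along $F(n+1)$, already available in $\ACA$ via \Cref{transfinite-set-induction-in-aca}. Concretely, $f_n$ can be characterised by setting $f_n(x)$ to be the $<_{F(n)}$-least element of $\dom{F(n)}$ not of the form $f_n(y)$ for some $y <_{F(n+1)} x$; this is precisely the kind of arithmetical recursion along a well-order that $\ATR_0$ permits, iterated simultaneously along each $F(n+1)$ with $n$ as a parameter.

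Next I would let $c_n$ be the $<_{F(n)}$-least element of $\dom{F(n)}$ not in the image of $f_n$, and define $d_n := (f_0 \circ \cdots \circ f_{n-1})(c_n) \in \dom{F(0)}$ for $n \geq 1$, with $d_0 := c_0$. Since $c_{n+1} \in \dom{F(n+1)}$, its image $f_n(c_{n+1})$ lies in the image of $f_n$, so $f_n(c_{n+1}) <_{F(n)} c_n$; applying the order-preserving composite $f_0 \circ \cdots \circ f_{n-1}$ yields $d_{n+1} <_{F(0)} d_n$. Thus $(d_n)_{n\in\Nat}$ is an infinite strictly $<_{F(0)}$-descending sequence in $\dom{F(0)}$, contradicting $\WF(F(0))$.

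The only genuinely $\ATR_0$-dependent move is the first step: defining each $f_n$ requires transfinite recursion along the well-order $F(n+1)$, and assembling the full sequence $(f_n)_{n\in\Nat}$ uniformly in $n$ is exactly what arithmetical transfinite recursion provides. Once the $f_n$ are in hand, the descent in step two is pure arithmetical bookkeeping (in fact primitive recursive from the coded data), so $\ACA$ alone would close out the argument.
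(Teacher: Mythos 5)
Your proof is correct, and there is nothing in the paper to compare it against in detail: the paper omits the proof of \Cref{wo-is-well-ordered} altogether, deferring to the literature (e.g.\ Simpson's monograph), and what you give is essentially the standard argument found there — compose the comparison embeddings witnessing $F(n+1) <_\WO F(n)$ to convert a $<_\WO$-descending sequence of well-orders into a $<_{F(0)}$-descending sequence in $\dom{F(0)}$, contradicting $\WF(F(0))$. The descent step itself is fine (you do not even need $c_n$ least: since the image of $f_n$ is a downward-closed proper subset omitting $c_n$, every element of it, in particular $f_n(c_{n+1})$, lies $<_{F(n)}$-below $c_n$), and, as you say, it is arithmetical bookkeeping once $(f_n)_{n\in\Nat}$ is available as a set, so $\ACA$ closes it out. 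Two points of hygiene on the one step that uses real strength, namely obtaining the sequence $(f_n)_{n\in\Nat}$ uniformly. First, your recursion clause ``the $<_{F(n)}$-least element not of the form $f_n(y)$ for $y<_{F(n+1)}x$'' must be guarded against exhausting $\dom{F(n)}$: one outputs a default value in that case and then shows by transfinite induction along $F(n+1)$ (via \Cref{transfinite-set-induction-in-aca}), using the hypothesised embedding, that exhaustion never occurs and the recursion reproduces that embedding; moreover the simultaneous recursion over all $n$ should be justified, e.g.\ by a single instance of arithmetical transfinite recursion along the $\omega$-sum of the $F(n+1)$, which is provably a well-order already in weak systems. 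Second, a slightly cleaner route in $\ATR_0$ avoids the recursion entirely: either invoke $\Sij 1 1$ choice (a theorem of $\ATR_0$) to pick witnessing embeddings directly, or use your uniqueness observation to note that the graph of $f_n$ is $\Dij 1 1$ (definable both by ``some embedding sends $x$ to $y$'' and ``every embedding sends $x$ to $y$'') and apply $\Dij 1 1$-comprehension, also available in $\ATR_0$. With either repair the argument is a complete and faithful proof of the proposition at the stated strength.
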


An important principle within $\PCA$ is \emph{arithmetical transfinite recursion} ($\ATR$).
Since we shall need to later bind the well-order over which recursion takes place, we better develop the principle explicitly.

\begin{definition}
[Approximants]
Let $\phi(X,x)$ be arithmetical and $R$ a relation.
We define:
\[
\Iord R \phi (a,x) \dfn 
\exists F \subseteq \dom R \times \Nat 
\left(
\forall b \in \dom R \, \forall y\, (F(b,y) \limp \exists c<_R b \, \phi (F(c), y)) \land F(a,x)
\right)
\]
\end{definition}
Intuitively we may see $\Iord R\phi(a)$ as the union of a family of sets $F(b)$, indexed by $b<_R a$, satisfying $F (b) = \bigcup\limits_{c<_R b} \phi(F(c))$, here construing $\phi(-)$ as an operation on sets. 
The notation we have used is suggestive: the point of this section is to characterise inductive definitions in terms of approximants given by transfinite recursion.

Note that $\Iord R \phi$ is a $\Sij 1 1 $-formula.
The following is well-known:

\begin{proposition}
    [Bounded recursion]
    \label{iord-rec-char}
    Let $\phi(X,x)$ be an arithmetical formula and suppose $\WO(R)$.
    $\Iord R\phi$ is a set (uniquely) satisfying:
    
    \begin{equation}
    \label{eq:iord-rec-char}
    \forall a \in \dom R \, \forall x\,  (\Iord R \phi (a,x) \liff \exists b<_R a \, \phi(\Iord R \phi (b),x))
    \qquad \quad 
    \left(
    \text{i.e.\ }
    \
    \Iord R\phi (a) = \bigcup\limits_{b<_R a} \phi(\Iord R\phi(b))
    \right)
\end{equation}
\end{proposition}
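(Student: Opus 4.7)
The plan is to extract $\Iord R \phi$ as a bona fide set via $\Sij 1 1$-comprehension, and then to verify both the recursion \eqref{eq:iord-rec-char} and uniqueness by two transfinite inductions along $R$ (via \Cref{transfinite-set-induction-in-aca}). Since the defining formula of $\Iord R \phi$ is manifestly $\Sij 1 1$, the first step is immediate: $\Sij 1 1$-comprehension is available in $\PCA$.

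For the recursion, I would fix $a \in \dom R$ and assume the equivalence holds for all $b <_R a$. For the forward implication, unfolding the $\Sij 1 1$ definition of $\Iord R \phi(a,x)$ supplies a witness set $F$ satisfying the one-sided closure property with $F(a,x)$; instantiating this property at $(a,x)$ yields some $c <_R a$ with $\phi(F(c), x)$. The key observation is that $F$ itself witnesses $\Iord R \phi(c',y)$ whenever $F(c',y)$, so $F(c) \subseteq \Iord R \phi(c)$; monotonicity of $\phi$ in $X$ (the formalisation of \Cref{pos-implies-mon}, relying on the standing positivity assumption) then delivers $\phi(\Iord R \phi(c), x)$. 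For the backward implication, given $b <_R a$ with $\phi(\Iord R \phi(b), x)$, I would construct the witness
$$ F \dfn (\Iord R \phi \cap (\{c : c \leq_R b\} \times \Nat)) \cup \{(a,x)\} $$
and verify the closure property case-wise: for $(b',y) \in F$ with $b' \leq_R b$ (so $b' \neq a$), the inductive hypothesis supplies some $c' <_R b'$ with $\phi(\Iord R \phi(c'), y)$, and $F$ agrees with $\Iord R \phi$ at $c'$ since $c' <_R b <_R a$; for $(b',y) = (a,x)$, the choice $c \dfn b$ works as $F(b) = \Iord R \phi(b)$ and $\phi(\Iord R \phi(b), x)$ holds by assumption.

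Uniqueness is a further transfinite induction: if $Y_1, Y_2 \subseteq \dom R \times \Nat$ both satisfy \eqref{eq:iord-rec-char} and agree strictly below $a$, they must agree at $a$ by unfolding the equation once. The main obstacle is in the forward direction, where one must bridge the \emph{ad hoc} witness $F$ extracted from the $\Sij 1 1$ definition with the canonical approximant $\Iord R \phi(c)$; this gap is precisely closed by monotonicity of $\phi$ in $X$, which is why positivity is a standing assumption for the formulas under consideration.
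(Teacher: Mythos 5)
Your proposal is correct, and there is in fact no proof in the paper to compare against: \Cref{iord-rec-char} is stated as a well-known fact with a pointer to the literature. Your argument --- obtaining $\Iord R\phi$ as a set by $\Sij 1 1$-comprehension, then proving both the recursion \eqref{eq:iord-rec-char} and uniqueness by transfinite induction along $R$ via \Cref{transfinite-set-induction-in-aca}, with an explicit witness construction in the backward direction and the observation that any witness $F$ certifies $F(c)\subseteq \Iord R\phi(c)$ in the forward direction --- is exactly the standard development.

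One point deserves emphasis. Your forward direction invokes monotonicity of $\phi(-)$ in $X$, which you attribute to a ``standing positivity assumption''; but \Cref{iord-rec-char} as stated assumes only that $\phi$ is arithmetical, and the section imposes no blanket positivity (the neighbouring \Cref{approxs-form-inc-chain} and \Cref{iord-rec-char-O} state it explicitly). Your instinct is nevertheless right that the hypothesis is genuinely needed and not an artefact of your proof: with the paper's one-sided $\Sij 1 1$ definition of $\Iord R \phi$, a witness need only satisfy $F(b)\subseteq \bigcup_{c<_R b}\phi(F(c))$, so $F(c)$ may be a proper subset of $\Iord R\phi(c)$, and for non-monotone $\phi$ one can arrange $\phi(F(c),x)$ while $\phi(\Iord R\phi(b),x)$ fails for every $b<_R a$ --- already over a three-element well-order --- so \eqref{eq:iord-rec-char} is simply false in general without positivity. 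The proposition should therefore be read with positivity of $\phi$ in $X$ (as in all of its applications in the paper), and under that reading your proof goes through; the backward direction and uniqueness, as you note, need no monotonicity.
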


As a consequence of transfinite induction, \Cref{transfinite-set-induction-in-aca} we have:

\begin{corollary}
\label{approxs-form-inc-chain}
    Let $\phi(X,x)$ be arithmetical and positive in $X$, and suppose $\WO(R)$.
    We have $\forall a<_R b \, \forall x (\Iord R\phi(a,x) \limp \Iord R\phi(b,x)).$
\end{corollary}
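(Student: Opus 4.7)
The plan is to derive the conclusion directly from the recursive characterisation \eqref{eq:iord-rec-char}, together with transitivity of $<_R$, without needing an explicit transfinite induction on top of \Cref{iord-rec-char}. The key observation is that the increasing-chain property is already baked into \eqref{eq:iord-rec-char}: the approximant at a point is a cumulative union over all strict $<_R$-predecessors.

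Concretely, fix $a, b \in \dom R$ with $a <_R b$ and suppose $\Iord R \phi(a,x)$. Applying the left-to-right direction of \eqref{eq:iord-rec-char} at $a$ (available by \Cref{iord-rec-char}, since $\WO(R)$), we obtain some $c <_R a$ with $\phi(\Iord R \phi(c), x)$. Since $\WO(R)$ in particular entails $\LO(R)$, the relation $<_R$ is transitive on $\dom R$, so from $c <_R a$ and $a <_R b$ we get $c <_R b$. Feeding this witness $c$ into the right-to-left direction of \eqref{eq:iord-rec-char} at $b$ yields $\Iord R \phi(b,x)$, as required.

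Notice that positivity of $\phi$ plays no direct role in this particular step; the increasing-chain property falls out purely from transitivity of $<_R$ and the cumulative form of \eqref{eq:iord-rec-char}. Positivity is, of course, crucial elsewhere in the development, e.g.\ to identify $\Iord R \phi$ with the genuine ordinal approximants of the monotone operator $\Ninterp \phi$ on $\pow\Nat$, and to guarantee that the construction stabilises at the least fixed point. I anticipate no real obstacle: the `main step' is just invoking \Cref{iord-rec-char} twice and carrying a witness across via transitivity. If one prefers a presentation that explicitly uses \Cref{transfinite-set-induction-in-aca}, one can equivalently argue by transfinite induction on $b \in \dom R$ that $\forall a <_R b\, \forall x\,(\Iord R \phi(a,x) \limp \Iord R \phi(b,x))$, but the argument above is more direct.
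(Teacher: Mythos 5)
Your argument is correct, but it is not the route the paper takes: the paper obtains the corollary by transfinite induction along $R$ (via \Cref{transfinite-set-induction-in-aca}), whereas you derive it directly from the recursion equation \eqref{eq:iord-rec-char}. Your derivation is sound: $a <_R b$ forces $a,b \in \dom R$ by the definition of $<_R$, the left-to-right direction of \eqref{eq:iord-rec-char} at $a$ yields a witness $c <_R a$ with $\phi(\Iord R \phi(c),x)$, transitivity of $<_R$ (available from $\LO(R)$, hence from $\WO(R)$, which is also what licenses \Cref{iord-rec-char} in the first place) gives $c <_R b$, and the right-to-left direction at $b$ re-uses the \emph{same} set $\Iord R\phi(c)$, so no monotonicity of $\phi(-)$ is invoked. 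Your side remark is also accurate: positivity of $\phi$ is not needed for this particular statement (indeed \Cref{iord-rec-char} itself is stated without positivity), because the construction is cumulative --- the approximant at $b$ is a union over \emph{all} $<_R$-predecessors, not an iterate of the previous stage --- so the chain property is purely a matter of the index set $\{c : c <_R a\}$ being contained in $\{c : c <_R b\}$. What each approach buys: yours is shorter, makes the logical dependencies sharper (only $\WO(R)$ through \Cref{iord-rec-char}, plus transitivity), and avoids an extra appeal to \Cref{transfinite-set-induction-in-aca}; the paper's induction formulation is the more standard template and generalises more readily to statements about approximants that genuinely do need to propagate along the order (where positivity and monotonicity re-enter), but for this corollary the induction is dispensable, as your proof shows.
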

Intuitively the above statement tells us that $\Iord R\phi (-)$ forms an increasing chain along $R$.

Henceforth we write $\Iord R\phi (x)\dfn \exists a \in \dom R \, \Iord R\phi(a,x)$ which, with $R$ occurring as a parameter, is again a $\Sij 1 1 $ formula.

\subsection{Formalising recursive ordinals and approximants}
\label{sec:rec-ordinals+approxs}

$\PCA$ is not strong enough a theory to characterise inductive definitions by limits of approximants, in general.
However, when the closure ordinals of inductive definitions are recursive, they may be specified by finite data and duly admit such a characterisation within $\PCA$.
This subsection is devoted to a development of this characterisation; the definitions and propositions have appeared previously in the literature, e.g., \cite{iid-book,Jager93:paomega}.

Let us fix a recursive enumeration of $\Sij 0 1$-formulas with free (first-order) variables among $x,y$,
and write $\alpha,\beta$ etc.\ to range over their G\"odel codes.
Thanks to a (relativised) universal $\Sij 01$-formula, 
we can readily evaluate (the codes of) $\Sij 0 1$ formulas already within $\RCA$.

In this way we may treat $\alpha,\beta$ etc.\ as binary relations, and duly extend the notations of the previous subsections approriately, e.g.\ freely writing $\dom \alpha, \leq_\alpha, <_\alpha, \LO(\alpha), \WF(\alpha), \WO(\alpha), \alpha <_\WO \beta, \Iord \alpha \phi $.

\begin{definition}
    [Recursive ordinals]
    Write $\O \dfn \{ \alpha : \WO(\alpha)\}$, obtained by $\Pij 1 1$-comprehension, and $\alpha <_\O \beta$ for $\O(\alpha) \land \O(\beta) \land \alpha <_\WO \beta$.
    
    We also write $\Iord \O \phi (x) \dfn \exists \alpha\in \O \,  \Iord \alpha \phi (x)$.
\end{definition}

Of course, well-foundedness of $\O$ under $<_\O$ is directly inherited from well-foundedness of $\WO$ under $<_\WO$, \Cref{wo-is-well-ordered}.
Note that $\Iord \O \phi (x)$ is again a $\Sij 1 1 $-formula, and so we have access to $\Iord \O \phi$ as a set within $\PCA$. 
In fact we even have access to the restriction $\Iord - {\phi} (-) \subseteq \O \times \Nat$ again by $\Sij 1 1$-comprehension.
As a result we can give a recursive characterisation of $\Iord \O\phi$ similar to \Cref{iord-rec-char} but at the level of $\O$:
\begin{proposition}
[Recursion]
\label{iord-rec-char-O}
    Let $\phi(X,x)$ be arithmetical and positive in $X$.
    We have:
    
    \begin{equation}
    \label{eq:iord-rec-char-O}
    \forall \alpha \in \O \, \forall x\,  (\Iord \alpha \phi (x) \liff \exists \beta <_\O \alpha \, \phi(\Iord \beta \phi, x))
    \qquad \quad 
    \left(
    \text{i.e.\ }
    \ 
    \Iord \alpha \phi  = \bigcup\limits_{\beta <_\O \alpha} \phi (\Iord \beta\phi)
    \right)
\end{equation}
\end{proposition}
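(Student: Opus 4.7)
The plan is to prove each direction of the biconditional separately, working in $\PCA$, with the main tools being bounded recursion (\Cref{iord-rec-char}), transfinite induction (\Cref{transfinite-set-induction-in-aca}), and positivity (monotonicity) of $\phi$.

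Underlying both directions is a matching lemma: if $f : \dom{\beta} \to \dom{\alpha}$ is an order-preserving bijection onto a downward-closed subset of $\alpha$, then for every $c \in \dom{\beta}$ and every $y$ we have $\Iord{\beta}{\phi}(c, y) \liff \Iord{\alpha}{\phi}(f(c), y)$. I would prove this by transfinite induction on $<_\beta$, extracting the required induction set via $\Pij{1}{1}$-comprehension and appealing to well-foundedness of $\beta$. At the inductive step, bounded recursion unfolds the two sides to $\exists d <_\beta c\, \phi(\Iord{\beta}{\phi}(d), y)$ and $\exists e <_\alpha f(c)\, \phi(\Iord{\alpha}{\phi}(e), y)$ respectively; since $f$ preserves order and maps onto a downward-closed set, the quantifier ranges correspond via $e = f(d)$, and the formulas inside agree by the induction hypothesis (upon substitution of extensionally equivalent sets into the $X$-slot of $\phi$).

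For the forward direction, assume $\Iord{\alpha}{\phi}(x)$, giving $a \in \dom{\alpha}$ with $\Iord{\alpha}{\phi}(a, x)$; bounded recursion then produces $b <_\alpha a$ with $\phi(\Iord{\alpha}{\phi}(b), x)$. Take $\beta$ to be $\alpha$ restricted to $\{c : c \leq_\alpha b\}$, a proper downward-closed subset (since $a$ is excluded), witnessing $\beta <_\O \alpha$. The matching lemma (with $f$ the inclusion) gives $\Iord{\alpha}{\phi}(b, y) \liff \Iord{\beta}{\phi}(b, y)$ for all $y$, whence $\Iord{\alpha}{\phi}(b) \subseteq \Iord{\beta}{\phi}$ as sets. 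Positivity of $\phi$ then yields $\phi(\Iord{\beta}{\phi}, x)$, as required.

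For the backward direction, fix $\beta <_\O \alpha$ witnessing $\phi(\Iord{\beta}{\phi}, x)$, with embedding $f$. Let $b_0 \in \dom{\alpha}$ be the least element outside the image of $f$, obtained by applying well-foundedness of $\alpha$ to properness of the initial segment. The matching lemma identifies each $\Iord{\beta}{\phi}(c)$ with $\Iord{\alpha}{\phi}(f(c))$, and since every $f(c) <_\alpha b_0$ a use of bounded recursion gives $\Iord{\beta}{\phi} \subseteq \Iord{\alpha}{\phi}(b_0)$. Positivity yields $\phi(\Iord{\alpha}{\phi}(b_0), x)$, and one further application of bounded recursion at any $a >_\alpha b_0$ in $\dom{\alpha}$ gives $\Iord{\alpha}{\phi}(a, x)$, hence $\Iord{\alpha}{\phi}(x)$. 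The main obstacle is the matching lemma itself: the inducted biconditional relates $\Sij{1}{1}$-defined sets and must be treated extensionally when substituted into $\phi$, so care is needed to cast it as a set of manageable complexity before invoking \Cref{transfinite-set-induction-in-aca} inside $\PCA$.
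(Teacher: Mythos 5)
Your overall strategy coincides with the paper's: a coherence lemma proved by transfinite induction (your ``matching lemma'' generalises the paper's key step $\Iord{\alpha}{\phi}(a,x)\liff\Iord{\alpha_a}{\phi}(x)$, where $\alpha_a$ is the initial segment of $\alpha$ up to and including $a$, and has the merit of dealing explicitly with the order-isomorphism implicit in $\beta<_\O\alpha$, which the paper's sketch glosses), followed by a reduction to bounded recursion (\Cref{iord-rec-char}) and positivity. Your forward direction is fine, and your caveat about the complexity of the inducted biconditional is handled correctly in $\PCA$: the approximants exist as sets (by \Cref{iord-rec-char}, via $\Sij 1 1$-comprehension), so the induction predicate is arithmetical in those sets rather than needing $\Pij 1 1$-comprehension as such.

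The gap is the final step of your backward direction. You pass from $\phi(\Iord{\beta}{\phi},x)$ to $\phi(\Iord{\alpha}{\phi}(b_0),x)$, where $b_0$ is the least element of $\dom{\alpha}$ outside the image of the embedding, and then invoke bounded recursion ``at any $a>_\alpha b_0$'' --- but no such $a$ need exist: the image of $f$ may be all of $\dom{\alpha}$ except its $<_\alpha$-maximum, in which case $b_0$ is that maximum. If $b_0$ has an immediate predecessor $c_0$, the argument is easily repaired: by \Cref{approxs-form-inc-chain} the set $\Iord{\beta}{\phi}$ coincides with $\Iord{\alpha}{\phi}(c_0)$, so positivity gives $\phi(\Iord{\alpha}{\phi}(c_0),x)$, and bounded recursion at $a=b_0$ itself (with witness $c_0<_\alpha b_0$) already yields $\Iord{\alpha}{\phi}(b_0,x)$, hence $\Iord{\alpha}{\phi}(x)$; so you should not detour through the stage at $b_0$. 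But if $b_0$ is both the maximum of $\alpha$ and a limit point of it (so $\beta$ has limit type), then $\Iord{\beta}{\phi}$ coincides with the full stage $\Iord{\alpha}{\phi}(b_0)$, and $\phi(\Iord{\alpha}{\phi}(b_0),x)$ lies one step beyond every stage indexed by $\dom{\alpha}$: no rearrangement of bounded recursion, monotonicity and the matching lemma closes this case. Note that the paper's own reduction via the segments $\alpha_a$, which always possess a greatest element, likewise only covers those $\beta$ isomorphic to some $\alpha_a$; the configuration you have left open is exactly the delicate boundary case of the stated equivalence, so it must be treated separately (or explicitly excluded) rather than absorbed into ``any $a>_\alpha b_0$''.
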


\begin{proof}
    [Proof sketch]
    First, let us write $\alpha_a$ for the initial segment of $\alpha$ up to (and including) $a$.
    Note that we have $\alpha_-(-)$ as a set by arithmetical comprehension in $\alpha$.
    By $\alpha$-induction on $a\in \dom \alpha$ we can show $\Iord \alpha \phi (a,x) \liff \Iord{\alpha_a} \phi (x)$.
    From here the equivalence follows directly by reduction to \Cref{iord-rec-char}.
\end{proof}

The following are well-known properties about $\O$:

\begin{proposition}
    [Properties of $\O$]
    \label{props-of-O}
    We have the following:
    \begin{enumerate}
        \item\label{ord-increase} (Increase) $\forall \alpha \in \O\, \exists \beta\in \O \, \alpha<_\O \beta$.
        \item\label{ord-collection} (Collection) $\forall x \exists \alpha \in \O \, \phi \limp \exists \beta \, \forall x\,  \exists \alpha <_\O \beta\  \phi$.
    \end{enumerate}
\end{proposition}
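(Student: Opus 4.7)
The plan is to treat each clause in turn, both by an explicit construction at the level of $\Sij 0 1$-codes.

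For Increase: given $\alpha \in \O$, I directly construct $\beta$ by appending a fresh top element. Using a pairing function, put $\dom \alpha$ on one ``track'' (coded as $\langle 0, a\rangle$) and a single new element $\langle 1, 0\rangle$ on another, declaring the new element to be $\beta$-above every element of the first track. This is visibly $\Sij 0 1$-definable uniformly in (the index of) $\alpha$, so yields an honest code of $\beta$. Linearity and well-foundedness lift immediately from $\alpha$: in particular any $\beta$-descending chain is eventually an $\alpha$-descending chain. Finally the embedding of the first track exhibits $\alpha$ as a proper initial segment of $\beta$, whence $\alpha <_\O \beta$.

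For Collection: I would extract a sequence of witnesses and then bound it uniformly. From the hypothesis together with an appropriate choice principle available in $\PCA$ (essentially $\Sij 1 1$-$\mathsf{AC}$, applied after reducing $\exists \alpha \in \O \, \phi$ to a suitable $\Sij 1 1$-form using $\Pij 1 1$-comprehension), I obtain a sequence $(\alpha_x)_{x \in \Nat}$ with each $\alpha_x \in \O$ and $\phi(x, \alpha_x)$. I then form $\beta$ as the $\omega$-sum $\sum_x (\alpha_x + 1)$, defined as a single $\Sij 0 1$-relation with the sequence as parameter via the universal $\Sij 0 1$-formula: lexicographic order on $\Nat \times \Nat$ where the second coordinate is ordered by the $\alpha_x$ corresponding to the first. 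Well-foundedness of $\beta$ reduces to that of each $\alpha_x$, since any infinite descent must eventually stabilise in the first coordinate and then give an $\alpha_x$-descent. Each $\alpha_x$ embeds as the proper initial segment of $\beta$ ending just before the start of the $(x{+}1)$-th track, giving $\alpha_x <_\O \beta$ as required.

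The main obstacle is the choice step in Collection: since $\O$ is $\Pij 1 1$ and the complexity of $\phi$ is not specified, one must carefully justify the passage from pointwise existence to a uniform sequence of witnesses. This is where the strength of $\Pij 1 1$-comprehension is genuinely used, whereas the Increase clause and the arithmetisation of the $\omega$-sum construction itself are routine (and would go through already in weaker fragments such as $\ATR_0$).
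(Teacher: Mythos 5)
The paper itself does not prove this proposition (it is invoked as a known fact, with the relevant formalisations delegated to the cited literature), so your argument has to stand on its own. Your proof of Increase is fine: appending a top point to a $\Sij 0 1$-coded well-order is again $\Sij 0 1$-coded (in the same fixed parameters $\vec X$), and the embedding onto the first track witnesses $\alpha <_\O \beta$. The incidental use of $\Sij 1 1$-$\mathsf{AC}$ in Collection is also not the real issue: since $\alpha$ ranges over \emph{numbers} (codes), once the witness relation is available as a set by comprehension one can just take least witnesses.

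The genuine gap is in the bounding step of Collection. Elements of $\O$ are codes of well-orders given by $\Sij 0 1$ formulas in the \emph{fixed} parameters $\vec X$; your $\beta=\sum_x(\alpha_x+1)$ is only $\Sij 0 1$ \emph{in the chosen sequence} $(\alpha_x)_{x\in\Nat}$, which is an arbitrary set produced by comprehension/choice and not among the permitted parameters. Since $\alpha<_\O\beta$ entails $\O(\beta)$, the conclusion demands a bound that is itself a code of a $\vec X$-recursive well-order, i.e.\ of order type below the relativised $\ck$ --- and that is the entire content of the principle; your construction only shows that the order types $|\alpha_x|$ have \emph{some} countable upper bound, which is trivial. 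Moreover the strategy cannot be patched: $\ck$ has cofinality $\omega$, so an arbitrary $\omega$-sequence of elements of $\O$ can be cofinal in $\O$, in which case no bound in $\O$ exists at all. Hence any correct proof must exploit the restricted complexity of $\phi$ (for unrestricted, e.g.\ $\Sij 1 1$, $\phi$ the statement as displayed is simply false: take $\phi(x,\alpha)$ saying ``if the $x$-th recursive tree is well-founded then its Kleene--Brouwer order embeds into $\alpha$''; the least witnesses are then cofinal). The standard route, which is what the cited sources formalise, is a $\Sij 1 1$-boundedness argument: if no bound in $\O$ existed, one could define $\WO$ (equivalently $\WF$) by a $\Sij 1 1$ formula built from $\phi$ and embeddability, contradicting its $\Pij 1 1$-completeness; this is formalisable in $\PCA$. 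Your closing remark that the argument ``would go through already in $\ATR_0$'' applies to the Increase clause and to the arithmetisation of the sum, but not to Collection, which is exactly where the strength and the definability restriction are needed.
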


Turning back to positive formulas again, we have the following useful consequence:
\begin{corollary}
\label{approx-in-pos-context}
    Let $\phi(X,x)$ and $\psi(X)$ be arithmetical and positive in $X$.
    $\psi(\Iord \O\phi ) \limp \exists \alpha \psi (\Iord \alpha\phi)$.
\end{corollary}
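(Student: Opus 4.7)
The plan is to prove this by structural induction on the positive formula $\psi(X)$, working within $\PCA$. The key ingredients will be the characterisation of $\Iord\O\phi$ as a union $\exists\alpha\in\O\,\Iord\alpha\phi$, the collection property (\Cref{props-of-O}.\ref{ord-collection}), and the fact that approximants form an increasing chain (\Cref{approxs-form-inc-chain}) combined with a formal counterpart of \Cref{pos-implies-mon} (monotonicity of positive contexts), giving us that $\alpha <_\O \beta$ implies $\psi(\Iord\alpha\phi)\limp\psi(\Iord\beta\phi)$ whenever $\psi$ is positive in $X$.

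The routine cases will be: when $\psi(X)$ is atomic and does not contain $X$, the claim is trivial (pick any $\alpha\in\O$, which exists by \Cref{props-of-O}.\ref{ord-increase}). When $\psi(X)$ is $X(t)$, the conclusion follows directly from the definition $\Iord\O\phi(x) \dfn \exists\alpha\in\O\,\Iord\alpha\phi(x)$. For $\psi_1\lor\psi_2$, apply the IH to whichever disjunct holds. For $\exists y\,\psi'(X,y)$, extract a witness $n$, apply the IH at $n$, and re-existentialise. For $\psi_1\land\psi_2$, apply the IH to each conjunct to obtain $\alpha_1,\alpha_2\in\O$, then use \Cref{props-of-O}.\ref{ord-increase} to find $\beta\in\O$ above both (e.g.\ iterate Increase, or use Collection with $x$ ranging over $\{1,2\}$), then invoke monotonicity via \Cref{approxs-form-inc-chain} to transport both statements to $\Iord\beta\phi$.

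The main obstacle, as expected, is the universal quantifier case $\psi(X) = \forall y\,\psi'(X,y)$. Assuming $\forall y\,\psi'(\Iord\O\phi,y)$, the IH gives, for each $n$, some $\alpha_n\in\O$ with $\psi'(\Iord{\alpha_n}\phi,n)$. Here we must invoke Collection (\Cref{props-of-O}.\ref{ord-collection}) applied to the formula $\exists\alpha\in\O\,\psi'(\Iord\alpha\phi,n)$ with parameter $n$, to obtain a single $\beta\in\O$ such that for every $n$ there is $\alpha_n <_\O \beta$ with $\psi'(\Iord{\alpha_n}\phi,n)$. Then, using positivity of $\psi'$ in $X$ together with \Cref{approxs-form-inc-chain}, we lift each instance to $\psi'(\Iord\beta\phi,n)$, yielding $\forall y\,\psi'(\Iord\beta\phi,y)$ as required.

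Two subtle points deserve care in the writeup. First, the monotonicity-from-positivity step should be stated and proved as an auxiliary lemma, by structural induction on the positive formula exactly as in \Cref{pos-implies-mon}, so that we may freely appeal to it in the inductive cases above; this is where arithmetical comprehension is implicitly used to form the relevant sets. Second, the application of Collection in the universal case requires that the formula $\exists\alpha\in\O\,\psi'(\Iord\alpha\phi,n)$ be in a form amenable to the collection scheme available in $\PCA$; since $\Iord\alpha\phi$ is $\Sij 1 1$ (with $\alpha$ as a set parameter) and $\psi'$ is arithmetical, the whole formula is $\Sij 1 1$, and $\Sij 1 1$-collection is provable in $\PCA$ — this is the only place where the ambient strength of the metatheory really matters.
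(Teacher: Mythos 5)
Your proposal is correct and follows essentially the same route as the paper's own (very terse) proof sketch: a meta-level induction on the structure of the positive formula $\psi(X)$, invoking the collection property of \Cref{props-of-O} at the $\forall$-quantifier case and the increasing-chain property \Cref{approxs-form-inc-chain} (together with monotonicity of positive contexts) to lift approximants to a common bound. Your additional remarks on the conjunction case and on the logical complexity of the formula fed to collection are sensible elaborations of what the paper leaves implicit.
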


\begin{proof}
    [Proof sketch]
    We proceed by (meta-level) induction on the structure of $\psi(X)$, appealing to \Cref{props-of-O} at a $\forall$-quantifier and \Cref{approxs-form-inc-chain} throughout.
\end{proof}

\subsection{Characterising inductive definitions as limits of approximants}
\label{sec:char-id-as-limit-of-approxs}

The main result of this section is:
\begin{theorem}
    [Characterisation]
    \label{iphi-is-lim-of-approxs}
    $\forall x (\I\phi(x) \liff \Iord \O \phi (x))$ (i.e.\ $\I\phi = \Iord \O \phi$).
\end{theorem}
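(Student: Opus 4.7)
The plan is to prove the two inclusions separately, exploiting the dual characterisation of $\I\phi$ as both a pre-fixed point of $\phi$ and the least such.

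For the direction $\Iord \O \phi \subseteq \I\phi$, I would fix $\alpha \in \O$ and show, by transfinite induction along the well-order $<_\alpha$, that $\forall a \in \dom \alpha \, \forall x (\Iord \alpha \phi(a,x) \limp \I\phi(x))$. The predicate being inducted over is $\Pij 1 1$ in $a$, since $\Iord\alpha\phi(a,x)$ is $\Sij 1 1$ and $\I\phi(x)$ is $\Pij 1 1$, so we may form it as a set by $\Pij 1 1$-comprehension and invoke the set-level transfinite induction of \Cref{transfinite-set-induction-in-aca}. At the inductive step, using the recursive characterisation of \Cref{iord-rec-char}, any witness to $\Iord\alpha\phi(a,x)$ yields some $c<_\alpha a$ with $\phi(\Iord\alpha\phi(c),x)$; the inductive hypothesis gives $\Iord\alpha\phi(c)\subseteq \I\phi$, and formalised monotonicity of $\phi$ (\Cref{pos-implies-mon}) together with the pre-fixed point axiom for $\I\phi$ (available via the $\Pij 1 1$ encoding) yields $\phi(\I\phi,x)$ and hence $\I\phi(x)$.

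For the direction $\I\phi \subseteq \Iord \O \phi$, I would instead verify that $\Iord \O \phi$ is itself a pre-fixed point of $\phi$, whence leastness of $\I\phi$ among pre-fixed points delivers the inclusion. Suppose $\phi(\Iord \O \phi, x)$. Applying \Cref{approx-in-pos-context} with $\psi(X) := \phi(X,x)$ (parametrising over $x$), there is some $\alpha \in \O$ with $\phi(\Iord \alpha \phi, x)$. By the increase property of \Cref{props-of-O}, pick $\beta \in \O$ with $\alpha <_\O \beta$; then the recursion equation for approximants along $\O$ (\Cref{iord-rec-char-O}) directly yields $\Iord \beta \phi (x)$, and hence $\Iord \O \phi(x)$.

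I expect the main subtlety to lie in the first direction: we must carefully combine $\Pij 1 1$-comprehension (to access $\I\phi$ as a set parameter suitable for the set-induction scheme) with the $\ACA$-level transfinite induction of \Cref{transfinite-set-induction-in-aca}, and ensure that the monotonicity of $\phi$ used internally is the formalised version inside $\PCA$, rather than its meta-level counterpart. The second direction, once the supporting lemmas of \Cref{sec:rec-ordinals+approxs} are in hand, amounts to little more than unwinding definitions.
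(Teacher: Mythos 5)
Your proposal matches the paper's proof essentially step for step: the direction $\Iord \O \phi \subseteq \I\phi$ by transfinite induction on $a \in \dom\alpha$ using \Cref{iord-rec-char}, monotonicity of $\phi$ and the pre-fixed point property of $\I\phi$, and the converse by showing $\Iord \O \phi$ is itself a pre-fixed point via \Cref{approx-in-pos-context}, the increase property of \Cref{props-of-O} and \Cref{iord-rec-char-O}, then invoking leastness. The only (harmless) difference is bookkeeping: the paper explicitly flags the use of $\Sij 1 1$-comprehension to have $\Iord \O \phi$ available as a set when instantiating the leastness schema, which you leave implicit, while you spell out the comprehension needed for the induction predicate in the other direction.
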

\begin{proof} 
    [Proof sketch]
    For $(\limp)$, it suffices to show that $\Iord \O \phi$ is a prefixed point of $\phi(-)$:\footnote{Here we are using expressions, say, $\phi(A)\subseteq B$ as an abbreviation for $\forall x (\phi(A,x) \limp B(x))$.}
    \[
    \begin{array}{r@{\ \subseteq \ }ll} 
         \phi(\Iord \O \phi) & \phi (\Iord \alpha \phi) & \text{for some $\alpha$, by \Cref{approx-in-pos-context}}\\
         \phi(\Iord \O \phi)& \Iord \beta \phi & \text{for some $\beta >_\WO \alpha$ by \Cref{props-of-O,iord-rec-char-O}} \\
         \phi(\Iord \O \phi)& \Iord \O \phi & \text{by definition of $\Iord \O\phi$} \\
         \I\phi & \Iord \O \phi & \text{$\because$ $\PCA$ proves $\I\phi$ is least among pre-fixed points}
    \end{array}
    \]
    Note here it is crucial that we have access to $\Iord\O\phi$ as a set, thanks to $\Sij 1 1 $-comprehension.
    
    For $(\leftarrow)$, we show $\Iord \alpha \phi (a) \subseteq \Iord \O \phi$ (i.e.\ $\forall x(\Iord \alpha\phi (a,x) \limp \I\phi(x))$)  by $\alpha$-induction on $a\in \dom \alpha$:
    \[
    \begin{array}{r@{\ \subseteq \ }ll}
         \Iord \alpha \phi (b) & \I\phi & \text{$\forall b<_\alpha a$ by inductive hypothesis} \\
         \bigcup\limits_{b<_\alpha a } \Iord \alpha \phi (b) & \I\phi & \text{by $\exists$-left-introduction} \\
         \phi\left (\bigcup\limits_{b<_\alpha a } \Iord \alpha \phi (b) \right) & \phi (\I\phi) & \text{by positivity of $\phi(-)$}\\
         \Iord \alpha \phi (a) & \phi(\I\phi) & \text{by \eqref{eq:iord-rec-char}} \\
         \Iord \alpha \phi(a) & \I\phi & \text{$\because$ $\PCA$ proves $\I\phi$ is a pre-fixed point}
    \end{array}
    \]
    Note here that it is crucial that we have access to $\I\phi$ as set, thanks to $\Pij 1 1 $-comprehension.
\end{proof}

\section{Simulating cyclic proofs within $\IDfin$}
\label{sec:cid-to-id}

The goal of this section is to finally establish the converse to \Cref{id-to-cid}:

\begin{theorem}
\label{cid-to-id}
    Let $\phi$ be arithmetical. 
    If $\CIDfin \proves \phi$ then $\IDfin \proves \phi$.
\end{theorem}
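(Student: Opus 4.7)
The plan is to arithmetise the soundness argument of \Cref{soundness-of-nwf-proofs} within $\PCA$ and then invoke \Cref{pca-conservative-over-idfin}. Fixing a cyclic $\lidfinnoind$-proof $\pi$ of the (arithmetical) formula $\phi$, it suffices to show $\PCA \proves \phi$. The formalisation rests on two main tools already established: the satisfaction predicate of \Cref{rel-sat-formalised}, which internalises $\rho \models \psi$ uniformly in $\psi$, and the characterisation $\I\phi = \Iord\O\phi$ of \Cref{iphi-is-lim-of-approxs}, which replaces the external countable ordinals of \Cref{sec:sat-wrt-approxs} by codes of recursive well-orders in $\O$.

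Working within $\PCA$ and assuming for contradiction that $\phi$ fails, I would internalise \Cref{reflecting-falsity} so that, at each inference step of $\pi$, one can arithmetically select a premiss together with an extension of the current assignment that keeps the sequent false. The key invariant to maintain is that, along an ancestry in which the greatest inductive predicate $\I\psi$ is preserved, the least approximant index $\alpha \in \O$ witnessing the LHS formula does not increase, and strictly decreases whenever $\I\psi$ is principal (an internal version of \Cref{unfolding-decreases-approx}, now using that the least-ordinal principle on $\O$ is provable in $\PCA$ via \Cref{wo-is-well-ordered}). Since $\pi$ is regular, $\supp\pi$ is essentially a finite object, so arithmetical recursion along $\pi$ produces a bad branch $(v_i)_{i<\omega} \subseteq \supp\pi$ with assignments $(\rho_i)_{i<\omega}$ falsifying each $\pi(v_i)$.

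On this branch I would extract a progressing trace $(\tau_i)_{i\geq k}$, its eventual greatest inductive predicate $\I\psi$ and threshold $k'$ via \Cref{prog-trace-properties}, and define $\alpha_i \in \O$ to be the least code such that $\tau_i^{\alpha_i}$ is satisfied by $\rho_i$, for $i\geq k'$. The invariants of the falsity-reflection construction together with \Cref{approxs-form-inc-chain} then ensure that $(\alpha_i)_{i\geq k'}$ is non-increasing under $<_\O$, while at each of the infinitely many $\idl\psi$ steps where $\tau_i$ is principal, we have $\alpha_{i+1} <_\O \alpha_i$. This contradicts the well-foundedness of $<_\O$ provable in $\PCA$ via \Cref{wo-is-well-ordered}, completing the internal proof of $\phi$; arithmetical conservativity then yields $\IDfin \proves \phi$.

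The hard part will be the internal extraction of the progressing trace along the concretely constructed branch. Externally, the existence of such a trace is built into the definition of $\CIDfin$, but internally in $\PCA$ one must argue that our arithmetically-defined bad branch carries one. I would exploit the regularity of $\pi$ to cast the trace condition as a B\"uchi-style condition on a finite quotient graph, so that the existence of a progressing trace on any infinite path reduces to a standard argument about infinite visits to strongly connected components, well within the strength of $\ACA \subseteq \PCA$. Once this automata-theoretic step is dispatched, the remainder is a careful internal replay of \Cref{sec:prf-of-soundness-of-cid}, now with ordinals living in $\O$ rather than the external countable ordinals.
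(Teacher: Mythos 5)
Your proposal reproduces the paper's argument essentially step for step: formalise the countermodel construction of \Cref{sec:soundness} inside $\PCA$ using the partial satisfaction predicate (\Cref{rel-sat-formalised}) and the recursive-ordinal approximants of \Cref{sec:approxs+tr-in-so-arith}, derive a contradiction with well-foundedness of $<_\WO$ (\Cref{wo-is-well-ordered}), and conclude by reflection (\Cref{reflection}) and arithmetical conservativity of $\PCA$ over $\IDfin$ (\Cref{pca-conservative-over-idfin}). The only point of divergence is the step you single out as hard---the theory \emph{knowing} that the fixed regular proof is progressing---which the paper does not re-derive via your B\"uchi/SCC analysis but simply imports from Das (2020), where it is shown available already in $\RCA$; the paper also quietly formalises elimination of substitution steps via weak K\"onig's lemma, a routine detail your sketch omits.
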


The argument proceeds essentially by formalising the soundness argument of \Cref{sec:soundness} \emph{within} $\PCA$, with respect to the partial satisfaction predicate $\sat{}$.
We spend most of this section explaining this formalisation.

We henceforth work within $\PCA$, unless otherwise stated.

\smallskip

\noindent
\textbf{Necessity of non-uniformity.}
In light of \Cref{pca-conservative-over-idfin} and \Cref{id-to-cid}, we obviously cannot formalise soundness of $\CIDfin$ \emph{uniformly} within $\PCA$, for G\"odelian reasons.
Instead we take a non-uniform approach.
Let us henceforth fix a $\CIDfin$ proof $\pi$ of a sequent $\Gamma \seqar \Delta$.
We assume $\pi$ uses only inductive predicates among $\vec I = \I{\phi_1}, \dots, \I{\phi_n}$.
All notions about (recursive) ordinals from \Cref{sec:approxs+tr-in-so-arith} are now relativised to $\vec I$ (recall that we allowed free set variables to occur as parameters throughout).

\smallskip

\noindent
\textbf{Formalising properties of traces.}
The results of \Cref{sec:about-traces}, in particular \Cref{prog-trace-properties}, involve only finitary combinatorics and are readily formalisable already within $\RCA$, essentially following the given (meta-level) proofs.

\smallskip

\noindent
\textbf{`Knowing that' a regular proof is progressing.}
At some point during the soundness argument, namely after constructing the `countermodel branch', we shall need to extract a progressing thread from an infinite branch of $\pi$. 
However, this requires our ambient theory \emph{knowing} that $\pi$ is progressing, hitherto a meta-level assumption.
Let us point out that in our non-uniform exposition, for fixed $\pi$, known progressiveness has been shown to be available in even the weakest of the `big five':
\begin{proposition}
    [$\RCA$, \cite{Das20:log-comp-cyc-arith}]
    $\pi$ is progressing.    
\end{proposition}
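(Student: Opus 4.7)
The plan is to reduce this $\Pij 1 1$ assertion to a decidable finitary check on the fixed finite graph underlying $\pi$, which $\RCA$ handles by simply verifying a specific true arithmetical statement about that finite object.

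First, since $\pi$ is regular, let $G_\pi$ denote the finite quotient of $\supp \pi$ by the sub-preproof equivalence $\sim$. Every infinite branch of $\pi$ projects to an infinite path in $G_\pi$ and, by pigeonholing through the finitely many nodes of $G_\pi$, must eventually confine itself to some strongly connected component. Build in tandem the \emph{trace graph} $H_\pi$: its vertices are pairs $(v,\phi)$ with $\phi$ occurring on the LHS of $\pi(v)$, and its edges encode the immediate ancestor relation, with certain edges marked ``progressing'' when the formula is principal for some $\idl\psi$ step. Both $G_\pi$ and $H_\pi$ are fixed finite objects, coded by standard natural numbers.

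The key reduction asserts that $\pi$ is progressing iff a decidable local condition on $(G_\pi, H_\pi)$ holds: for every strongly connected component $C$ of $G_\pi$ and every simple cycle $\sigma$ of $G_\pi$ contained in $C$, the fibre of $H_\pi$ over $\sigma$ admits a cycle that traverses a progressing edge. The forward direction is a contrapositive construction, extracting an infinite non-progressing branch from any ``bad'' cycle in $G_\pi$ witnessing the negation. The converse direction requires only a weak König-style argument on the finitely-branching graph $G_\pi$ to lift a progressing cycle at the trace level back to a progressing trace along an arbitrary infinite branch; this amounts to a bounded finite-prefix selection, available already in $\RCA$.

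The main obstacle is the uniform formalisation of this equivalence: stated uniformly over all cyclic proofs it would require formalising containment of regular $\omega$-languages and invoke substantial reverse-mathematical strength (cf.\ \cite{KMPS16:log-strength-buchi,KMPS19:log-strength-buchi}). We sidestep this entirely by non-uniformity: with $\pi$ fixed throughout, the meta-theoretic hypothesis that $\pi$ is a valid cyclic proof reduces the task to verifying a specific decidable ($\Sij 0 0$) statement about the concrete finite objects $G_\pi$ and $H_\pi$, which $\RCA$ proves by direct evaluation. This is precisely the non-uniformity exploited in \cite{Das20:log-comp-cyc-arith} to drop the required ambient theory down to the weakest of the big five.
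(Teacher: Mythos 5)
Your key claim---that $\pi$ is progressing iff every simple cycle $\sigma$ in each strongly connected component of $G_\pi$ carries a cycle with a progressing edge in the fibre of $H_\pi$ over $\sigma$---is false in the direction you actually need. The per-cycle condition is necessary (your contrapositive argument for that direction is fine), but it is not sufficient: progressiveness quantifies over \emph{all} infinite branches, including those that interleave distinct simple cycles through a shared node, and traces over different cycles need not be composable there. Concretely, take a node $v$ lying on two simple cycles $\sigma_A$ and $\sigma_B$, with formulas $\phi,\psi$ on the LHS at $v$, arranged so that around $\sigma_A$ the only surviving trace is $\phi\to\phi$ (progressing) while $\psi$ has no immediate ancestor (e.g.\ it is weakened), and symmetrically around $\sigma_B$ only $\psi\to\psi$ (progressing) survives while $\phi$ dies. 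Every simple cycle then carries a progressing fibre-cycle, yet the branch alternating $\sigma_A$ and $\sigma_B$ admits no infinite trace at all, so $\pi$ is not progressing. This is precisely why the trace condition for regular preproofs is a genuinely global $\omega$-regular (B\"uchi inclusion) property rather than a cycle-local one, and why the paper's discussion points to the reverse mathematics of $\omega$-automata (the KMPS results): there is no correct reduction of the shape you propose. Note also that the paper itself gives no proof here; it imports the proposition from \cite{Das20:log-comp-cyc-arith}, where the non-uniform argument is carried out with considerably more care than a simple-cycle check.

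Even setting the false equivalence aside, your final step conflates two different things: verifying a true decidable ($\Sij 0 0$) fact about the finite objects $G_\pi,H_\pi$, and proving in $\RCA$ the $\Pij 1 1$ statement ``every infinite branch of $\pi$ has a progressing trace''. The latter is a statement about infinite sets and cannot be established ``by direct evaluation''; what must be proved inside $\RCA$ is the implication from the finite certificate to the infinitary statement, and that is exactly the mathematically substantial part. Your sketch of that implication leans on ``a weak K\"onig-style argument'' to lift cycle-level information to an arbitrary infinite branch, but weak K\"onig's lemma is not available in $\RCA$ ($\WKL$ is strictly stronger), and constructing a progressing trace along an arbitrary branch with only $\Delta^0_1$ comprehension and $\Sigma^0_1$ induction is a genuine obstacle that your proposal does not address. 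So both the reduction and the base-theory bookkeeping need to be replaced, not just tightened.
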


\smallskip

\noindent
\textbf{Formalised admissibility of substitution.}
The admissibility of substitution, \Cref{subst-admissibility}, is available already in weak theories by a simple inductive construction: from $\pi$ define $\pi'$ a substitution-free $\lidfinnoind$ non-wellfounded proof node-wise by simply composing the (finitely many) substitutions up to a node.
The progressing criterion means that there are, in particular, infinitely many non-substitution steps along any infinite branch, and so by (weak) K\"onig's lemma have that the resulting binary tree is well-defined.

We henceforth work with $\pi'$ a substitution-free $\lidfinnoind$ non-wellfounded proof of $\Gamma \seqar \Delta$ using only inductive predicates among $\vec I = \I{\phi_1}, \dots, \I{\phi_n}$, that we `know' is progressing.

\smallskip

\noindent
\textbf{Formalising satisfaction with respect to approximants.}
We already defined recursive approximants in $\PCA$ in \Cref{sec:rec-ordinals+approxs}. 
The formalised version of \Cref{pos-form-can-be-approxed} is given by \Cref{approx-in-pos-context}, and the formalised version of \Cref{pos-form-approx-increasing} is available already in pure logic.
The existence of least ordinals satisfying a property is given by well-foundedness of $\WO$ under $<_\WO$, \Cref{wo-is-well-ordered}, and thus \Cref{unfolding-decreases-approx} follows from \Cref{eq:iord-rec-char}.

\smallskip

\noindent
\textbf{Formalised building countermodels.}
To speak about satisfaction and truth of formulas in $\pi'$, we use the formalised notion $\sat{\vec I}$ in place of the meta-level `$\models$'.
Note that the inductive predicates occurring in $\pi'$ parametrise the satisfiability predicate.
From here \Cref{reflecting-falsity} is formalised by proving soundness of the rules of $\lidfinnoind$ with respect to $\sat{\vec I}$, keeping track of immediate ancestry and using the results of the previous subsection. 
We use the (formalised) notions $\Iord \alpha \phi$ as inputs to $\sat{\vec I}$ in order to evaluate formulas like $\phi^\alpha$, and we rely on well-foundedness of the class of well-orders, \Cref{wo-is-well-ordered}, to make the correct decisions cf.~\cref{false-branch-approx-decreases}.
Let us point out that, for a fixed step $\infrule$, the description of $(\rho',S')$ from $(\rho,S)$ is arithmetical in $\vec I, \sat{\vec I}$, $<_\WO$, $\O$ and $\vec I^{-}$, by essentially following the specification in the Lemma statement, relativising `ordinals' to $\O$.

\smallskip

\noindent
\textbf{Putting it all together, formally.}
Finally let us discuss how the proof of \Cref{soundness-of-nwf-proofs} (for $\pi'$) is formalised.
Recall that the infinite `countermodel branch' $(v_i)_{i < \omega}$ is recursive in the construction from (formalised) \Cref{reflecting-falsity}. 
Since that construction was arithmetical (in certain set symbols), we indeed have access to the countermodel branch $(v_i)_{i<\omega}$ as a set by comprehension.
Now, since we know that $\pi'$ is progressing, we can duly take a progressing trace $(\tau_i)_{i\geq k}$ along it.
From here the obtention of the sequence of (now recursive) ordinals $(\alpha_i)_{i\geq k}$ is obtained by a simple comprehension instance arithmetical in $\sat{\vec I}$, $\vec I^-$ and $<_\WO$.
The remainder of the argument goes through as written, appealing to formalised versions of auxiliary statements.

From here we may conclude the main result of this section as promised:

\begin{proof}
    [Proof sketch of \Cref{cid-to-id}]
    From $\CIDfin \proves \phi$, for $\phi$ arithmetical, the explanations in this section give us $\PCA \proves \sat{\emptyset}(\emptyset, \code \phi, \emptyset)$.
    By reflection, \Cref{reflection}, we thus have $\PCA \proves \phi$, and so by conservativity, \Cref{pca-conservative-over-idfin}, we have $\IDfin \proves \phi$, as required.
\end{proof}

\section{Conclusions}

We presented a new cyclic system $\CIDfin$ formulated over the language $\langiid$ of finitely iterated arithmetical inductive definitions.
We showed the arithmetical equivalence of $\CIDfin$ and its inductive counterpart $\IDfin$ by nontrivially extending techniques that have recently appeared in the setting of cyclic arithmetic \cite{Simpson17:cyc-arith,Das20:log-comp-cyc-arith}.
Among other things, this work serves to further test the metamathematical techniques and methodology now available in cyclic proof theory.

Extensions of predicate logic by `ordinary' inductive definitions, which are essentially quantifier-free but allow for a form of simultaneous induction, were extensively studied by Brotherston and Simpson, in particular in the setting of cyclic proofs \cite{Brotherston05:cyc-prf,BroSim06:seq-calc-inf-desc,BroSimp11:seq-calc-inf-desc}. 
Indeed recently Berardi and Tatsuta have shown that cyclic systems for extensions of Peano and Heyting arithmetic by such inductive definitions prove the same theorems as the corresponding inductive systems \cite{BerTat17:equivalence,BerTat18:int-equivalence}. 
As noted by Das in \cite{Das20:log-comp-cyc-arith} the result of \cite{BerTat17:equivalence} (for Peano arithmetic) is, in a sense, equivalent to Simpson's in \cite{Simpson17:cyc-arith} since ordinary inductive definitions can be encoded by $\Sigma_1$-formulas: closure ordinals of ordinary inductive definitions are always bounded above by $\omega$.
Comparing to the current work, recall that the closure ordinals of even a single arithmetical inductive definition exhaust all recursive ordinals.

There are many other possible extensions of the language of arithmetic $\langarith$ by fixed points. 
One natural avenue for further work would be to consider $\lang_\alpha$ for both $\alpha<\omega$ and $\alpha\geq \omega$. 
Again the corresponding finitary systems $\ID_\alpha$ play a crucial role in the ordinal analysis of stronger impredicative subsystems of second-order arithmetic (see, e.g., \cite{sep-proof-theory}).
However what may be more interesting in the context of cyclic proof theory is the extension of $\langarith $ (and $\langiid$) by so-called `general' inductive definitions, as in \cite{Lubarsky1993definableSO,moellerfeld02:gen-ind-dfns}.
These essentially extend the syntax of $\langarith$ in the same way that fixed points of the modal $\mu$-calculus extend the language of modal logic, in particular allowing set parameters within inductive definitions.
Such a setting necessarily exhibits more complicated metatheory, but is a natural target in light of the origins of cyclic proof theory based in the $\mu$-calculus and first-order logic with inductive definitions.
To this end, let us point out that cyclic systems for the `first-order $\mu$-calculus' have already appeared \cite{sprengerdam03:conf,sprengerdam03:journal,AfsEnqLei22:fo-mu-calc},
and so could form the basis of such investigation.

\bibliography{main}

\begin{thebibliography}{10}

\bibitem{AfsEnqLei22:fo-mu-calc}
Bahareh Afshari, Sebastian Enqvist, and Graham~E Leigh.
\newblock {Cyclic proofs for the first-order $\mu$-calculus}.
\newblock {\em Logic Journal of the IGPL}, 08 2022.
\newblock jzac053.
\newblock \href
  {http://arxiv.org/abs/https://academic.oup.com/jigpal/advance-article-pdf/doi/10.1093/jigpal/jzac053/45229663/jzac053.pdf}
  {\path{arXiv:https://academic.oup.com/jigpal/advance-article-pdf/doi/10.1093/jigpal/jzac053/45229663/jzac053.pdf}},
  \href {https://doi.org/10.1093/jigpal/jzac053}
  {\path{doi:10.1093/jigpal/jzac053}}.

\bibitem{AfshariLeigh17:cut-free-mu}
Bahareh Afshari and Graham~E. Leigh.
\newblock Cut-free completeness for modal mu-calculus.
\newblock In {\em 32nd Annual {ACM/IEEE} Symposium on Logic in Computer
  Science, {LICS} 2017, Reykjavik, Iceland, June 20-23, 2017}, pages 1--12.
  {IEEE} Computer Society, 2017.
\newblock \href {https://doi.org/10.1109/LICS.2017.8005088}
  {\path{doi:10.1109/LICS.2017.8005088}}.

\bibitem{BerTat17:non-equivalence}
Stefano Berardi and Makoto Tatsuta.
\newblock Classical system of martin-l{\"{o}}f's inductive definitions is not
  equivalent to cyclic proof system.
\newblock In Javier Esparza and Andrzej~S. Murawski, editors, {\em Foundations
  of Software Science and Computation Structures - 20th International
  Conference, {FOSSACS} 2017, Held as Part of the European Joint Conferences on
  Theory and Practice of Software, {ETAPS} 2017, Uppsala, Sweden, April 22-29,
  2017, Proceedings}, volume 10203 of {\em Lecture Notes in Computer Science},
  pages 301--317, 2017.
\newblock \href {https://doi.org/10.1007/978-3-662-54458-7\_18}
  {\path{doi:10.1007/978-3-662-54458-7\_18}}.

\bibitem{BerTat17:equivalence}
Stefano Berardi and Makoto Tatsuta.
\newblock Equivalence of inductive definitions and cyclic proofs under
  arithmetic.
\newblock In {\em 32nd Annual {ACM/IEEE} Symposium on Logic in Computer
  Science, {LICS} 2017, Reykjavik, Iceland, June 20-23, 2017}, pages 1--12.
  {IEEE} Computer Society, 2017.
\newblock \href {https://doi.org/10.1109/LICS.2017.8005114}
  {\path{doi:10.1109/LICS.2017.8005114}}.

\bibitem{BerTat18:int-equivalence}
Stefano Berardi and Makoto Tatsuta.
\newblock Intuitionistic podelski-rybalchenko theorem and equivalence between
  inductive definitions and cyclic proofs.
\newblock In Corina C{\^{\i}}rstea, editor, {\em Coalgebraic Methods in
  Computer Science - 14th {IFIP} {WG} 1.3 International Workshop, {CMCS} 2018,
  Colocated with {ETAPS} 2018, Thessaloniki, Greece, April 14-15, 2018, Revised
  Selected Papers}, volume 11202 of {\em Lecture Notes in Computer Science},
  pages 13--33. Springer, 2018.
\newblock \href {https://doi.org/10.1007/978-3-030-00389-0\_3}
  {\path{doi:10.1007/978-3-030-00389-0\_3}}.

\bibitem{BerTat19:non-equivalence}
Stefano Berardi and Makoto Tatsuta.
\newblock Classical system of martin-lof's inductive definitions is not
  equivalent to cyclic proofs.
\newblock {\em Log. Methods Comput. Sci.}, 15(3), 2019.
\newblock \href {https://doi.org/10.23638/LMCS-15(3:10)2019}
  {\path{doi:10.23638/LMCS-15(3:10)2019}}.

\bibitem{Brotherston05:cyc-prf}
James Brotherston.
\newblock Cyclic proofs for first-order logic with inductive definitions.
\newblock In Bernhard Beckert, editor, {\em Automated Reasoning with Analytic
  Tableaux and Related Methods, International Conference, {TABLEAUX} 2005,
  Koblenz, Germany, September 14-17, 2005, Proceedings}, volume 3702 of {\em
  Lecture Notes in Computer Science}, pages 78--92. Springer, 2005.
\newblock \href {https://doi.org/10.1007/11554554\_8}
  {\path{doi:10.1007/11554554\_8}}.

\bibitem{BroDisPet11:aut-cyc-ent}
James Brotherston, Dino Distefano, and Rasmus~Lerchedahl Petersen.
\newblock Automated cyclic entailment proofs in separation logic.
\newblock In Nikolaj~S. Bj{\o}rner and Viorica Sofronie{-}Stokkermans, editors,
  {\em Automated Deduction - {CADE-23} - 23rd International Conference on
  Automated Deduction, Wroclaw, Poland, July 31 - August 5, 2011. Proceedings},
  volume 6803 of {\em Lecture Notes in Computer Science}, pages 131--146.
  Springer, 2011.
\newblock \href {https://doi.org/10.1007/978-3-642-22438-6\_12}
  {\path{doi:10.1007/978-3-642-22438-6\_12}}.

\bibitem{BroSim06:seq-calc-inf-desc}
James Brotherston and Alex Simpson.
\newblock Complete sequent calculi for induction and infinite descent.
\newblock In {\em 22nd {IEEE} Symposium on Logic in Computer Science {(LICS}
  2007), 10-12 July 2007, Wroclaw, Poland, Proceedings}, pages 51--62. {IEEE}
  Computer Society, 2007.
\newblock \href {https://doi.org/10.1109/LICS.2007.16}
  {\path{doi:10.1109/LICS.2007.16}}.

\bibitem{BroSimp11:seq-calc-inf-desc}
James Brotherston and Alex Simpson.
\newblock Sequent calculi for induction and infinite descent.
\newblock {\em J. Log. Comput.}, 21(6):1177--1216, 2011.
\newblock \href {https://doi.org/10.1093/logcom/exq052}
  {\path{doi:10.1093/logcom/exq052}}.

\bibitem{Das20:log-comp-cyc-arith}
Anupam Das.
\newblock On the logical complexity of cyclic arithmetic.
\newblock {\em Log. Methods Comput. Sci.}, 16(1), 2020.
\newblock \href {https://doi.org/10.23638/LMCS-16(1:1)2020}
  {\path{doi:10.23638/LMCS-16(1:1)2020}}.

\bibitem{iid-book}
Solomon Feferman, W~Sieg, and W~Buchholz.
\newblock {\em Iterated inductive definitions and subsystems of analysis:
  Recent proof-theoretical studies}.
\newblock Springer, 1981.

\bibitem{Jager93:paomega}
Gerhard J{\"{a}}ger.
\newblock Fixed points in peano arithmetic with ordinals.
\newblock {\em Ann. Pure Appl. Log.}, 60(2):119--132, 1993.
\newblock \href {https://doi.org/10.1016/0168-0072(93)90039-G}
  {\path{doi:10.1016/0168-0072(93)90039-G}}.

\bibitem{KMPS16:log-strength-buchi}
Leszek~Aleksander Kolodziejczyk, Henryk Michalewski, Pierre Pradic, and Michal
  Skrzypczak.
\newblock The logical strength of b{\"{u}}chi's decidability theorem.
\newblock In Jean{-}Marc Talbot and Laurent Regnier, editors, {\em 25th {EACSL}
  Annual Conference on Computer Science Logic, {CSL} 2016, August 29 -
  September 1, 2016, Marseille, France}, volume~62 of {\em LIPIcs}, pages
  36:1--36:16. Schloss Dagstuhl - Leibniz-Zentrum f{\"{u}}r Informatik, 2016.
\newblock \href {https://doi.org/10.4230/LIPIcs.CSL.2016.36}
  {\path{doi:10.4230/LIPIcs.CSL.2016.36}}.

\bibitem{KMPS19:log-strength-buchi}
Leszek~Aleksander Kolodziejczyk, Henryk Michalewski, Pierre Pradic, and Michal
  Skrzypczak.
\newblock The logical strength of b{\"{u}}chi's decidability theorem.
\newblock {\em Log. Methods Comput. Sci.}, 15(2), 2019.
\newblock \href {https://doi.org/10.23638/LMCS-15(2:16)2019}
  {\path{doi:10.23638/LMCS-15(2:16)2019}}.

\bibitem{Lubarsky1993definableSO}
Robert~S. Lubarsky.
\newblock $\mu$-definable sets of integers.
\newblock {\em Journal of Symbolic Logic}, 58:291 -- 313, 1993.

\bibitem{moellerfeld02:gen-ind-dfns}
Michael M\"ollerfeld.
\newblock {\em Generalized inductive definitions}.
\newblock [electronic ed.] edition, 2002.
\newblock URL: \url{https://nbn-resolving.de/urn:nbn:de:hbz:6-85659549572}.

\bibitem{NW96:games-for-mu}
Damian Niwinski and Igor Walukiewicz.
\newblock Games for the mu-calculus.
\newblock {\em Theor. Comput. Sci.}, 163(1{\&}2):99--116, 1996.
\newblock \href {https://doi.org/10.1016/0304-3975(95)00136-0}
  {\path{doi:10.1016/0304-3975(95)00136-0}}.

\bibitem{sep-proof-theory}
Michael Rathjen and Wilfried Sieg.
\newblock {Proof Theory}.
\newblock In Edward~N. Zalta and Uri Nodelman, editors, {\em The {Stanford}
  Encyclopedia of Philosophy}. Metaphysics Research Lab, Stanford University,
  {W}inter 2022 edition, 2022.

\bibitem{RowBro17:aut-cyc-term}
Reuben N.~S. Rowe and James Brotherston.
\newblock Automatic cyclic termination proofs for recursive procedures in
  separation logic.
\newblock In Yves Bertot and Viktor Vafeiadis, editors, {\em Proceedings of the
  6th {ACM} {SIGPLAN} Conference on Certified Programs and Proofs, {CPP} 2017,
  Paris, France, January 16-17, 2017}, pages 53--65. {ACM}, 2017.
\newblock \href {https://doi.org/10.1145/3018610.3018623}
  {\path{doi:10.1145/3018610.3018623}}.

\bibitem{Simpson17:cyc-arith}
Alex Simpson.
\newblock Cyclic arithmetic is equivalent to peano arithmetic.
\newblock In Javier Esparza and Andrzej~S. Murawski, editors, {\em Foundations
  of Software Science and Computation Structures - 20th International
  Conference, {FOSSACS} 2017, Held as Part of the European Joint Conferences on
  Theory and Practice of Software, {ETAPS} 2017, Uppsala, Sweden, April 22-29,
  2017, Proceedings}, volume 10203 of {\em Lecture Notes in Computer Science},
  pages 283--300, 2017.
\newblock \href {https://doi.org/10.1007/978-3-662-54458-7\_17}
  {\path{doi:10.1007/978-3-662-54458-7\_17}}.

\bibitem{Simpson99:monograph}
Stephen~G. Simpson.
\newblock {\em Subsystems of second order arithmetic}.
\newblock Perspectives in mathematical logic. Springer, 1999.

\bibitem{sprengerdam03:journal}
Christoph Sprenger and Mads Dam.
\newblock On global induction mechanisms in a {\(\mathrm{\mu}\)}-calculus with
  explicit approximations.
\newblock {\em {RAIRO} Theor. Informatics Appl.}, 37(4):365--391, 2003.
\newblock \href {https://doi.org/10.1051/ita:2003024}
  {\path{doi:10.1051/ita:2003024}}.

\bibitem{sprengerdam03:conf}
Christoph Sprenger and Mads Dam.
\newblock On the structure of inductive reasoning: Circular and tree-shaped
  proofs in the {\(\mathrm{\mu}\)}-calculus.
\newblock In Andrew~D. Gordon, editor, {\em Foundations of Software Science and
  Computational Structures, 6th International Conference, {FOSSACS} 2003 Held
  as Part of the Joint European Conference on Theory and Practice of Software,
  {ETAPS} 2003, Warsaw, Poland, April 7-11, 2003, Proceedings}, volume 2620 of
  {\em Lecture Notes in Computer Science}, pages 425--440. Springer, 2003.
\newblock \href {https://doi.org/10.1007/3-540-36576-1\_27}
  {\path{doi:10.1007/3-540-36576-1\_27}}.

\bibitem{Studer08:pt-for-mu}
Thomas Studer.
\newblock On the proof theory of the modal mu-calculus.
\newblock {\em Stud Logica}, 89(3):343--363, 2008.
\newblock \href {https://doi.org/10.1007/s11225-008-9133-6}
  {\path{doi:10.1007/s11225-008-9133-6}}.

\bibitem{TelBro17:aut-ver-temp-props}
Gadi Tellez and James Brotherston.
\newblock Automatically verifying temporal properties of pointer programs with
  cyclic proof.
\newblock In Leonardo de~Moura, editor, {\em Automated Deduction - {CADE} 26 -
  26th International Conference on Automated Deduction, Gothenburg, Sweden,
  August 6-11, 2017, Proceedings}, volume 10395 of {\em Lecture Notes in
  Computer Science}, pages 491--508. Springer, 2017.
\newblock \href {https://doi.org/10.1007/978-3-319-63046-5\_30}
  {\path{doi:10.1007/978-3-319-63046-5\_30}}.

\bibitem{TelBro20:aut-ver-temp-props}
Gadi Tellez and James Brotherston.
\newblock Automatically verifying temporal properties of pointer programs with
  cyclic proof.
\newblock {\em J. Autom. Reason.}, 64(3):555--578, 2020.
\newblock \href {https://doi.org/10.1007/s10817-019-09532-0}
  {\path{doi:10.1007/s10817-019-09532-0}}.

\end{thebibliography}

\end{document}